\newcommand{\mm}{\mathfrak m}
\newcommand{\nn}{\mathfrak n}
\newcommand{\Dsf}{\mathsf{D}}
\newcommand{\Z}{\mathbb{Z}}
\newcommand{\N}{\mathbb{N}}
\newcommand{\Fc}{\mathcal{F}}
\newcommand{\up}[1]{{{}^{#1}\!}}
\newcommand\bst{{\boldsymbol t}}
\newcommand\bsu{{\boldsymbol u}}
\newcommand\bsv{{\boldsymbol v}}
\newcommand\bsx{{\boldsymbol x}}
\newcommand\bsy{{\boldsymbol y}}
\newcommand\bsz{{\boldsymbol z}}
\DeclareMathOperator{\pnt}{\raise 0.5mm \hbox{\large\bf.}}
\DeclareMathOperator{\Coker}{Coker}
\DeclareMathOperator{\depth}{depth}
\DeclareMathOperator{\gr}{gr}
\DeclareMathOperator{\chara}{char}
\DeclareMathOperator{\Tor}{Tor}
\DeclareMathOperator{\Ext}{Ext}
\DeclareMathOperator{\Ann}{Ann}
\DeclareMathOperator{\lind}{ld}
\DeclareMathOperator{\linp}{lin}
\DeclareMathOperator{\Ker}{Ker}
\DeclareMathOperator{\id}{id}
\DeclareMathOperator{\img}{Im}
\DeclareMathOperator{\reg}{reg}
\DeclareMathOperator{\order}{order}
\DeclareMathOperator{\projdim}{pd}
\def\+#1{\relax\ifmmode\if\noexpand #1\relax \mathop{\kern
    0pt^+{#1}}\nolimits\else \kern 0pt^+\!#1 \fi\else$^*$#1\fi}
\newcommand{\dcat}[1]{{\mathsf D}(#1)}
\newcommand{\dgrcat}[1]{{\mathsf D}^{\mathsf f}(\mathsf{Gr}\,#1)}
\newcommand{\dgrcatn}[1]{{\mathsf{D}^{\mathsf f}_{\!{\scriptscriptstyle\mathsf -}}}(\mathsf{Gr}\,#1)}
\newcommand{\dgrcatp}[1]{{\mathsf{D}^{\mathsf f}_{\!{\scriptscriptstyle\mathsf +}}}(\mathsf{Gr}\,#1)}
\newcommand{\dtensor}[1]{\otimes^{\mathsf L}_{#1}}
\newcommand{\grcat}[1]{{\mathsf{Gr}}\,#1}
\newcommand{\shift}{\mathsf{\Sigma}}
\newcommand{\vphi}{\varphi}
\newtheorem{thm}{\bf Theorem}[section]
\newtheorem{lem}[thm]{\bf Lemma}
\newtheorem{cor}[thm]{\bf Corollary}
\newtheorem{prop}[thm]{\bf Proposition}
\newtheorem{quest}[thm]{\bf Question}
\theoremstyle{definition}
\newtheorem{defn}[thm]{\bf Definition}
\theoremstyle{plain}
\newtheorem*{thm*}{Theorem}
\theoremstyle{remark}
\newtheorem{rem}[thm]{Remark}
\newtheorem{ex}[thm]{Example}
\numberwithin{equation}{section}
\title[Koszulness and the Frobenius morphism]{Regularity over homomorphisms and a Frobenius characterization of Koszul algebras}
\author{Hop D. Nguyen}
\address{Dipartimento di Matematica, Universit\`a di Genova, Via Dodecaneso 35, 16146 Genoa, Italy}
\address{Institut f\"ur Mathematik, Friedrich-Schiller Universit\"at Jena, 
Ernst-Abbe-Platz 2, 07743 Jena}
\email{ngdhop@gmail.com}
\author{Thanh Vu}
\address{Department of Mathematics, University of California at Berkeley, 
Berkeley CA 94720}
\address{Department of Mathematics, University of Nebraska-Lincoln, Lincoln, 
NE 68588}
\email{tvu@unl.edu}
\subjclass[2010]{Primary 13D02, 13D05, 13D09}
\keywords{Koszul algebras, Homological dimensions, Castelnuovo-Mumford regularity, Frobenius endomorphism.}
\begin{document}

\begin{abstract}
Let $R$ be a standard graded algebra over an $F$-finite field of characteristic 
$p > 0$. Let $\phi:R\to R$ be the Frobenius endomorphism. For each finitely 
generated graded $R$-module $M$, let $\up{\phi}M$ be the abelian group $M$ with 
the $R$-module structure induced by the Frobenius endomorphism. The $R$-module 
$\up{\phi}M$ has a natural grading given by $\deg x=j$ if $x\in M_{jp+i}$ for 
some $0\le i \le p-1$. In this paper, we prove that $R$ is Koszul if and only if 
there exists a non-zero finitely generated graded $R$-module $M$ such that 
$\reg_R \up{\phi}M <\infty$. This result supplies another instance for the 
ability of the Frobenius in detecting homological properties, as exemplified by 
Kunz's famous regularity criterion. The main technical tool is the notion of 
Castelnuovo-Mumford regularity over certain homomorphisms between $\N$-graded 
algebras. The latter notion is a common generalization of the relative and 
absolute Castelnuovo-Mumford regularity of modules.
\end{abstract}

\maketitle

\section{Introduction}
\label{intro}
Let $R$ be a commutative noetherian ring and $M$ a finitely generated 
$R$-module. In commutative algebra, it is well-known that the homological 
properties of $R$ greatly affect the asymptotic homological behaviour of $M$. 
For example, if $R$ is regular local then the Auslander-Buchsbaum-Serre theorem 
says that the minimal free resolution $M$ terminates after finitely many steps. 
For another example, consider Koszul algebras. Assume that $R$ is a graded 
algebra over a field $k$ and $R$ is generated by finitely many elements of 
degree $1$ (in other words, $R$ is standard graded over $k$). The graded maximal 
ideal of $R$ is denoted by $\mm$. Then $R$ is a {\em Koszul algebra} if the 
Castelnuovo-Mumford regularity of $k=R/\mm$ is zero. By definition, if $M$ is a 
finitely generated graded $R$-module with the minimal graded free resolution
\[
\cdots \to F_i \to F_{i-1} \to \cdots \to F_0 \to M \to 0,
\]
then the {\em Castelnuovo-Mumford regularity of $M$} as an $R$-module is defined 
as follows
\[
\reg_R M=\sup\{t_i(M)-i:i\ge 0\},
\]  
where for each $i$, $t_i(M)$ is the maximal degree of a minimal generator of $F_i$. In closed form, $t_i(M)=\sup\{j:\Tor^R_i(M, R/\mm)_j\neq 0\}$. When $R$ is a Koszul algebra, Avramov and Eisenbud in \cite{AE} showed that $\reg_R M < \infty$ for every $M$. For a recent survey on Koszul algebras and related topics, we refer to \cite{CDR}. 

Conversely and perhaps more surprisingly, the asymptotic homological behaviour 
of {\it a single module} $M$ may force certain homological properties on $R$. 
For example, take $M=k$ the residue field; then Avramov and Peeva \cite{AP} 
showed that the finiteness of $\reg_R k$ implies the Koszul property of $R$. 
This is an analog of Auslander-Buchsbaum-Serre's characterization of regular 
local rings. Further results of this type were also discovered for, e.g., 
complete intersections, Gorenstein rings or Cohen-Macaulay rings. 

But the residue field is not the only module for detecting homological 
properties of $R$. For local rings of characteristic $p > 0$, results of Kunz 
\cite{K} and more generally of Rodicio \cite{Rod} showed that $\up{\phi}R$ is 
equally good as the residue field as a testing object for the regularity 
property of $R$, where $\phi$ is the Frobenius endomorphism. Recall that, if 
$R$ is a ring of characteristic $p>0$, the Frobenius endomorphism $\phi: 
R\to R$ is given by $\phi(a)=a^p$ for each $a\in R$. Given $e\in \Z, e\ge 1$, 
let $\phi^e$ be the $e$-th power of $\phi$. Denote by $\up{\phi^e}M$ the abelian 
group $M$, considered as an $R$-module via scalar restriction along $\phi^e$. If 
$R$ is local, Kunz \cite{K} proved that $R$ is a regular ring if and 
only if $\up{\phi^e}R$ is a flat $R$-module. Built on Kunz's criterion, a 
result of Rodicio \cite[Theorem 2]{Rod} says that a local ring $R$ 
is regular if and only if $\up{\phi}R$ has finite flat dimension as an 
$R$-module. For further information about the homological significance of the 
Frobenius endomorphism, the reader may consult, e.g., \cite{AIM}, \cite{BM}, 
\cite{IS}, \cite{Mi}, \cite{TY}. Our initial goal was to study an analog of 
Rodicio's theorem for Koszul algebras. 

To set up the result, we need first to equip gradings to modules with the 
$R$-action induced by the Frobenius. If $R$ is standard graded and $M$ is 
graded, the module $\up{\phi^e}M$ has a natural grading as follows. Denote 
$q=p^e$. Observe that $\phi^e$ is also a homogeneous ring homomorphism from $R$ 
to $R^{(q)}=\oplus_{j\ge 0}R_{jq}$, the $q$-th Veronese subring of $R$. For each 
$i=0,1,\ldots,q-1$, the Veronese module $V_i(q,M)=\bigoplus_{j\in \Z}M_{jq+i}$ 
comes equipped with the grading $\deg x=j$ if $x\in M_{jq+i}$, as a module over 
$R^{(q)}$. By scalar restriction along $\phi^e$, each $V_i(q,M)$ becomes a 
graded $R$-module, and so does $\up{\phi^e}M=\bigoplus_{i=0}^{q-1}V_i(q,M)$. We 
call this grading of $\up{\phi^e}M$ the {\em Veronese grading}. 

Recently, in \cite{AHIY}, Avramov, Hochster, Iyengar and Yao gave a vast 
generalization of Rodicio's result. Namely, they showed in 
\cite[Theorem~1.1]{AHIY} that for a local ring $R$ with $\chara R>0$, $R$ is 
regular if there exist an $e>0$ and a non-zero finitely generated $R$-module $M$ 
such that $\up{\phi^e}M$ has finite flat dimension. Suggested by this result, we 
are able to prove the following analog of Rodicio's result, namely a Frobenius 
characterization of Koszul algebras. In the statement, recall that $R$ is called 
{\em $F$-finite} if $\phi:R\to R$ is a finite morphism (for example $\Z/(p)$ is 
an $F$-finite field).  
\begin{thm}
\label{main1}
Let $k$ be an $F$-finite field of characteristic $p>0$ and $R$ be a standard graded $k$-algebra. Assume that there exist an $e>0$ and a non-zero finitely generated graded $R$-module $M$ such that $\reg_R \up{\phi^e}M <\infty$. Then $R$ is a Koszul algebra.
\end{thm}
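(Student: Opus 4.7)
The overall strategy is to reduce the conclusion to the finiteness of $\reg_R k$: by the theorem of Avramov and Peeva recalled in the introduction, $R$ is Koszul if and only if $\reg_R k < \infty$. Hence it suffices to prove that $\reg_R\up{\phi^e}M<\infty$ implies $\reg_R k <\infty$. My plan is to obtain this by applying the theory of Castelnuovo-Mumford regularity over $\N$-graded algebra homomorphisms promised in the abstract.

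As a first step, I would view $\phi^e\colon R \to R$ as a degree-preserving homomorphism between two $\N$-graded algebras: give the source the standard grading of $R$ and give the target the contracted grading in which $\deg x=j$ for $x\in R_{jq+i}$ with $0\le i<q$. With these choices $\phi^e$ sends degree $j$ to degree $j$, and the modules $\up{\phi^e}M$ and $\up{\phi^e}R$ become finitely generated graded $R$-modules; the $F$-finiteness of $k$ is exactly what is needed to ensure this finite generation.

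The central step is then to apply the regularity-over-homomorphisms framework to this $\phi^e$ in order to establish an inequality of the shape
\[
\reg_R k \;\le\; A\cdot \reg_R \up{\phi^e}M + B,
\]
with constants $A$ and $B$ depending only on $R$ and $e$. Such a bound should arise from a change-of-rings spectral sequence (or an equivalent comparison of minimal free resolutions) relating $\Tor^R_\bullet(\up{\phi^e}M,k)$ to $\Tor^R_\bullet(k,k)$ through the auxiliary module $\up{\phi^e}R$. The point is that the contracted grading bundles $q$ consecutive internal degrees of $M$ into each external degree, so the low-index part of the minimal resolution of $\up{\phi^e}M$ already reflects high-degree information about $R$, which is what lets us bound $\reg_R k$ rather than merely $\reg_R M$. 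Combining the inequality with the hypothesis then gives $\reg_R k <\infty$, and Avramov-Peeva finishes the proof.

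The principal obstacle is establishing this key inequality: for arbitrary $M$, finiteness of $\reg_R M$ does not imply Koszulness---for example, $M=R$ always has $\reg_R R=0$---so the argument must exploit the specific ``unfolding by a factor of $q$'' behavior of the Frobenius. Making this precise, particularly over non-perfect $F$-finite fields where $\up{\phi^e}k$ is not simply $k$, is exactly the task handled by the theory of regularity over $\N$-graded homomorphisms developed in the sections preceding the proof.
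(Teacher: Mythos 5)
Your endpoint---reduce to finiteness of $\reg_R k$ and invoke Avramov--Peeva---is the right target, and the intuition that Frobenius compresses $q$ consecutive internal degrees into one external degree is the correct heuristic. However, the proposed central step, a single inequality of the form $\reg_R k \le A\cdot \reg_R\up{\phi^e}M + B$ obtained from one change-of-rings spectral sequence, is a genuine gap, and I do not believe it can be made to work for a fixed $e$.

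The obstruction is that at stage $e$ there is no Koszul algebra in the picture to play off of. The relative Avramov--Peeva statement the paper actually proves (Theorem~\ref{relative_thm}) requires the target $S$ of a homogeneous morphism $\vphi\colon R\to S$ with $\reg_\vphi M<\infty$ to already be Koszul. The natural homogeneous factorization of $\phi^e$ is $R\to R^{(q)}$, but $R^{(q)}$ need not be Koszul when $q$ is small relative to $\reg R$. The paper's fix is an iteration: it builds complexes $M^{i+1}=(M^i)^{(1/d)}\dtensor{R^{(1/d)}}M$, propagates finiteness via the composition theorem (Theorem~\ref{composition}) to get $\reg_{\psi^i}M^i<\infty$ for every $i$, then chooses $i$ large enough that $R^{(d^i)}$ is Koszul by Backelin's theorem (cited as \cite[Theorem~2]{ERT}), and only then invokes Theorem~\ref{relative_thm}. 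Your sketch has no substitute for this iteration, and a one-shot spectral-sequence comparison of $\Tor^R(\up{\phi^e}M,k)$ with $\Tor^R(k,k)$ has nothing to bootstrap from, since $\up{\phi^e}R$ itself has no a priori regularity bound.

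A secondary issue is the grading setup in your first step. Putting the contracted grading on the \emph{target} copy of $R$ does not make it a standard graded $k$-algebra: in that grading the degree-$0$ piece is $\bigoplus_{i=0}^{q-1}R_i$, not $k$. The paper instead keeps two separate devices: the homogeneous map $R\to R^{(q)}$ together with the Veronese grading on $\up{\phi^e}M=\bigoplus_i V_i(q,M)$, and the fractional Veronese $R^{(1/q)}$ which regrades the \emph{source}. The comparison your intuition calls for, between $\reg_\vphi M$ and $\reg_{\widehat\vphi}\up{\vphi}M$, is exactly Theorem~\ref{reg_fract_Veronese}(ii), proved through the functor $\Phi$ of Section~\ref{fract_Veronese}, not through a reinterpretation of $\phi^e$ as degree-preserving between two copies of $R$.
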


If $k$ is $F$-finite, it is easy to show that $\phi^e$ is a finite endomorphism 
for every $e\ge 1$; see Remark \ref{Frobenius_finite}. In this paper, we will 
also improve Theorem \ref{main1} by allowing $k$ to be of arbitrary 
characteristic and replacing the power $\phi^e$ of Frobenius by a certain 
(possibly) non-finite endomorphism of $R$. To afford such a statement, for any 
homomorphism of graded algebras $\vphi:R\to S$ and any complex of graded modules 
$M$ over $S$ (satisfying mild conditions), we introduce the {\em regularity of 
$M$ over the homomorphism $\vphi$}, denoted by $\reg_{\vphi}M$. The regularity 
over a homomorphism is a broad generalization of the Castelnuovo-Mumford 
regularity. It will be constructed in Section \ref{reg_over_morphism} using 
ideas of Avramov, Iyengar and Miller \cite{AIM}, which were later developed in 
\cite{AHIY}. The main technical result in this paper, which generalizes Theorem 
\ref{main1}, is:
\begin{thm}
\label{orderge2}
Let $R$ be a standard graded $k$-algebra. Let $\psi: R\to R$ be an endomorphism {\em of order $d\ge 2$}, i.e. $\deg \psi(a)=d\deg a$ for any homogeneous element $a\in R$. Assume that there exists a non-zero finitely generated graded $R$-module $M$  such that $\reg_{\psi}M<\infty$. Then $R$ is a Koszul algebra.
\end{thm}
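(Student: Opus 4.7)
The plan is to show that the hypothesis forces $\reg_R k<\infty$, which by the Avramov--Peeva theorem implies that $R$ is Koszul.

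The first step is to propagate finite $\psi$-regularity from the single nonzero module $M$ to the residue field $k$. Assuming that the general theory developed in Section~\ref{reg_over_morphism} endows $\reg_\psi(-)$ with the usual short-exact-sequence estimates and syzygy behavior of ordinary Castelnuovo--Mumford regularity, one argues by the standard induction: a minimal free presentation $0\to \Omega M\to F\to M\to 0$ yields $\reg_\psi \Omega M<\infty$, and inspection of the surjection $M\twoheadrightarrow M/\mm M$, whose target is a finite direct sum of shifts of $k$, extracts $\reg_\psi k<\infty$.

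The second step is the heart of the proof: deduce $\reg_R k<\infty$ from $\reg_\psi k<\infty$ using $d\ge 2$. The Veronese regrading along $\psi$ fixes $k$ as an abelian group (since $k$ is concentrated in degree $0$), but $\reg_\psi k$ bounds the Tor-degrees of $\Tor^R_i(k,k)$ against the scaling factor $d$---roughly, a bound of the shape ``$j$ at most $C+(1-1/d)\,i$'' on the nonvanishing range. Iterating with $\psi^n$, which has order $d^n$, the analogous bound should become ``$j$ at most $C_n+(1-1/d^n)\,i$''; provided the constants $C_n$ stay bounded as $n\to\infty$, one obtains an honest linear bound $j\le i+C'$, i.e.\ $\reg_R k<\infty$.

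The chief obstacle is controlling the iteration. One must first show that $\reg_\psi M<\infty$ entails finite regularity over every power $\psi^n$, which requires a clean composition/change-of-rings statement for regularity over graded ring homomorphisms of (in general non-finite) positive order; this is where the theory of Section~\ref{reg_over_morphism} must do its heaviest lifting, and is precisely why the $F$-finite hypothesis of Theorem~\ref{main1} can be dispensed with. One must then arrange the constants so that $d\ge 2$ becomes the exact threshold, which is forced upon any proof since the identity endomorphism has order $1$ for any $R$, Koszul or not, and so the theorem must fail at $d=1$.
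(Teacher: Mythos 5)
Your Step 1 is the fatal gap. You claim that from a minimal free presentation $0\to\Omega M\to F\to M\to 0$ and the surjection $M\twoheadrightarrow M/\mm M$ you can ``extract $\reg_\psi k<\infty$'' from $\reg_\psi M<\infty$. Even granting that $\reg_\psi$ has all the short-exact-sequence estimates of ordinary regularity, the sequence $0\to\mm M\to M\to M/\mm M\to 0$ only gives $\reg_\psi(M/\mm M)\le\max\{\reg_\psi M,\ \reg_\psi(\mm M)-1\}$, and nothing controls $\reg_\psi(\mm M)$ a priori (syzygies go the wrong way: the free presentation bounds $\reg_\psi\Omega M$, not $\reg_\psi(\mm M)$, and passing back down the resolution does not bound $\reg_\psi k$ by $\reg_\psi M$). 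The clearest sign that this step cannot work is that it makes no use of $d\ge2$: applied to $d=1$ (the identity endomorphism) it would read ``$\reg_R M<\infty$ for a single nonzero $M$ implies $\reg_R k<\infty$'', which is false --- take $M=R$, for which $\reg_R R=0$, over any non-Koszul $R$. The hypothesis $d\ge2$ must enter \emph{before} one ever reaches $k$.

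Your Step 2 points in the right direction but is also under-specified in a way the paper resolves quite differently. You want $\reg_{\psi^n}k<\infty$ for all $n$ with uniformly bounded constants, but the composition theorem (Theorem~\ref{composition}) does not preserve the module: given $\reg_{\pi'}L<\infty$ and $\reg_\pi N<\infty$, it yields finiteness of $\reg_{\pi\circ\pi'}$ not for $L$ or $N$ but for the \emph{new} complex $L^{(1/d)}\dtensor{R^{(1/d)}}N$. The paper therefore builds an auxiliary sequence $M^1=M$, $M^{i+1}=(M^i)^{(1/d)}\dtensor{R^{(1/d)}}M$, proves $\reg_{\psi^i}M^i<\infty$ inductively, then picks $i$ with $d^i\ge(\reg R+1)/2$ so that $R^{(d^i)}$ is Koszul (Backelin/Eisenbud--Reeves--Totaro), passes via Theorem~\ref{reg_fract_Veronese}(ii) to $\reg_{\widehat{\psi^i}}\up{\psi^i}M^i<\infty$, and finally invokes Theorem~\ref{relative_thm}, which requires only \emph{some} nonzero complex over the Koszul Veronese target to have finite relative regularity --- no reduction to $k$ is needed at any stage. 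This circumvents your Step 1 entirely and makes the role of $d\ge2$ transparent: $d^i\to\infty$ is what eventually produces a Koszul target ring.
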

See Remark \ref{char_0} for examples of endomorphisms of order at least $2$ in characteristic $0$.

Before going into the details, we would like to mention three features in the 
proofs of Theorems \ref{main1} and  \ref{orderge2}. Firstly, although the 
statements of these results do not involve complexes of modules, their proofs 
use heavily the derived categories and homological algebra of complexes. The 
necessary background materials will be presented in Section \ref{background}. 
Secondly, to derive the main Theorem \ref{main1} from Theorem \ref{orderge2}, 
we need to 
compare the regularity of $\up{\phi^e}M$ over $R$ with the regularity of $M$ 
over the homomorphism $\phi^e$. The foundation for this comparison is laid in 
Sections \ref{Veronese_gradings} (on Veronese gradings) 
and \ref{reg_over_morphism} and its goal will be reached in Theorem 
\ref{reg_fract_Veronese}(ii). Finally, the proof of Theorem \ref{orderge2} is 
based on passing from the morphism $\psi: R\to R$ to one of its iterations. The 
passage is possible thanks to results in Section 
\ref{factorizations_compositions}, particularly Theorem \ref{composition} on 
regularity over compositions of homomorphisms. The fact that $\psi$ has order at 
least $2$ allows the passage from the module $M$ to certain complex over a 
Veronese subring of $R$. After these modifications of morphism and module, we 
use Theorem \ref{relative_thm} to conclude that $R$ is Koszul. The latter result 
was inspired by Apassov's work \cite[Theorem~R]{Apa}. Below, the condition 
``$M\in \dgrcatp S$" means that the homology $H_i(M)$ is finitely generated for 
each $i$ and $H_i(M)=0$ for $i\ll 0$ (see Section \ref{background}).
\begin{thm}
\label{relative_thm}
Let $\vphi: R\to S$ be a homogeneous morphism of standard graded algebras over the fields $k$ and $l$ respectively, where $S$ is a Koszul $l$-algebra. Assume that there exists a complex of graded $S$-modules $M\in \dgrcatp S$ such that $M\not\simeq 0$ and $\reg_{\vphi}M<\infty$. Then $R$ is a Koszul $k$-algebra.
\end{thm}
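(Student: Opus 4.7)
The strategy would be to reduce to the Avramov--Peeva criterion, which says $R$ is Koszul iff $\reg_R k < \infty$. So the task is to bound $\reg_R k$ using the hypothesis $\reg_\vphi M < \infty$ together with the Koszulness of $S$.

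The main tool I would use is a change-of-rings spectral sequence along $\vphi$. Put $T = S \otimes_R^L k$, a bounded-below complex with $H_q(T) = \Tor^R_q(S,k)$. For any $S$-complex $N$ the associativity $N \otimes_R^L k \simeq N \otimes_S^L T$ gives a convergent spectral sequence
\[
E^2_{p,q} \;=\; \Tor^S_p\bigl(N, H_q(T)\bigr) \;\Longrightarrow\; \Tor^R_{p+q}(N, k),
\]
with differentials compatible with the internal grading (in the Veronese sense of Section \ref{reg_over_morphism} if one wants to allow $\vphi$ of order $d \ge 2$). I would apply this in two stages. First with $N = M$: by hypothesis the abutment satisfies a linear-in-homological-degree bound on internal grading, while Koszulness of $S$ plus $M \in \dgrcatp S$ gives $\reg_S M < \infty$ (Avramov--Eisenbud, extended to bounded-below complexes via a spectral sequence on $H_\ast(M)$), pinning down the support of the $E^2$ page. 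Since $M \not\simeq 0$, Nakayama yields a minimal $p_0$ with $\Tor^S_{p_0}(M, l) \ne 0$; pairing such a class with $l$-quotients of $H_q(T)$ at its extremal internal degree exhibits classes on the $E^2$ page at a position where no incoming or outgoing differential can interact with them. These classes survive to $E^\infty$, and matching against the abutment bound forces $\reg H_q(T) \le q + C$ for a constant $C$ independent of $q$; equivalently, $S$ has finite regularity as an $R$-complex via $\vphi$.

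Second, I would rerun the spectral sequence with $N = l$. Here $\reg_S l = 0$ by Koszulness of $S$, so $E^2_{p,q}(l) = \Tor^S_p(l, H_q(T))$ is supported in internal degrees at most $p + \reg H_q(T) \le p + q + C$ using the bound from the first stage. This transfers via convergence of the spectral sequence to $\reg_R l < \infty$. Finally, $l$ is a $k$-algebra via $\vphi$ and in particular a free graded $k$-module concentrated in internal degree $0$, so $\Tor^R_\ast(l, k) \cong \Tor^R_\ast(k, k) \otimes_k l$ and $\reg_R k = \reg_R l < \infty$; Avramov--Peeva then finishes the argument.

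The main obstacle I anticipate lies in the first stage: guaranteeing that the classes produced by the minimality of $p_0$ actually survive all later differentials of the spectral sequence, rather than being killed before reaching $E^\infty$. Extracting them at extremal positions where incoming and outgoing differentials have zero internal-degree component is the natural approach, but the bigraded bookkeeping --- especially if one wants the argument to extend to homomorphisms of order $d \ge 2$ as needed for Theorem \ref{orderge2} --- is delicate, and is presumably encapsulated in the main technical machinery of Section \ref{reg_over_morphism}.
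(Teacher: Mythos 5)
Your proposal correctly identifies the target (Avramov--Peeva) and the right intermediate object $T = S \otimes_R^{\mathsf L} k$, but it diverges substantially from the paper's proof and leaves two genuine gaps, one of which you acknowledge yourself.

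First, there is a subtle conflation at the start of your first stage: the hypothesis $\reg_{\vphi}M < \infty$ controls $H_i(k \dtensor{R} K[\boldsymbol{x};M])$ where $\boldsymbol{x}$ generates $\nn$ modulo $\mm S$ --- not $H_i(k \dtensor{R} M) = \Tor^R_i(M,k)$, which is your abutment. These two agree only when $\vphi$ is finite, which Theorem \ref{relative_thm} does not assume; for non-finite $\vphi$ the groups $\Tor^R_i(M,k)$ need not even be finite-dimensional. The paper sidesteps this by immediately replacing $M$ with the Koszul complex $N = K[\boldsymbol{y};M]$, so that the hypothesis directly controls $k \dtensor{R} N$.

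Second, and more seriously, the extraction of the bound $\reg_S H_q(T) \le q + C$ from the abutment requires that extremal classes on the $E^2$ page survive to $E^\infty$. You flag this as "delicate" and "presumably encapsulated in the main technical machinery of Section \ref{reg_over_morphism}" --- but it is not: the paper never runs this spectral sequence at all. Instead it invokes the Poincar\'e-series factorization
\[
P^S_{k\dtensor{R}N}(t,y) \;=\; P^R_k(t,y)\,P^S_N(t,y),
\]
valid because $N$ is a Koszul complex on a minimal generating set of $\nn/\mm S$ (this is \cite[Lemma~1.5.3]{AF}). Since Betti numbers are nonnegative, there is no cancellation in a product of Poincar\'e series, so regularity is \emph{additive}: $\reg_S(k\dtensor{R}N) = \reg_R k + \reg_S N$. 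Combined with Proposition \ref{reg_of_complex}, Koszulness of $S$ (which gives $\reg_S H_i(k\dtensor{R}N) = t_i$, as these are $l$-modules), and $\reg_S N > -\infty$ from $N\not\simeq 0$, one reads off $\reg_R k < \infty$ directly. This additivity-of-regularity device is precisely what obviates your spectral-sequence survival problem, and it is the key idea you are missing. Your second stage (running the spectral sequence with $N = l$ and then passing from $\reg_R l$ to $\reg_R k$) would be fine modulo these points, but it is rendered unnecessary by the shorter route.
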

The proofs of the main results can be found in Section \ref{F_application}. In the final section of this paper, we provide a partial analog of Theorem \ref{main1} for local rings and raise some related questions.

\subsection*{Acknowledgements}
We wish to express our special gratitude to Srikanth B. Iyengar for his critical 
comments and suggestions. His suggestion of Proposition \ref{reg_of_complex} 
helped us to correct an error. More importantly, he brought to our attention the 
method in \cite{AHIY} and a rough version of Theorem \ref{composition}, all of 
which were decisive for us to arrive at the main results in this paper.

We are indebted to the two anonymous referees for their attentive reading of 
the paper and valuable criticism. Thanks to their excellent editorial work, 
many mathematical inaccuracies were cleared up and the quality of the 
presentation including subtle notational details has been tremendously improved.

The first named author was supported by the CARIGE foundation.

\section{Preliminaries}
\label{background}
\subsection*{Graded complexes}
Let $(R,\mm,k)$ be an $\N$-graded $k$-algebra with graded maximal ideal $\mm$, 
where $k$ is a field. By convention, all the algebras in this paper are finitely 
generated. Modules are identified with chain complexes concentrated in degree 
$0$. Throughout, complexes are chain complexes. The abelian category of 
complexes of graded modules and degree $0$ homomorphisms of complexes over $R$ 
is denoted by $\grcat R$. Let $\Dsf(\grcat R)$ be its derived category. Let 
$\dgrcat R$ be the full subcategory of $\Dsf(\grcat R)$ consisting of 
degreewise finite complexes, i.e.~complexes $M$ such that $H_i(M)$ is finitely 
generated for every $i$. We say that $M$ is {\em homologically bounded below} 
(or above) if $H_i(M)=0$ for $i\ll 0$ (respectively, for $i\gg 0$). The full 
subcategory of $\dgrcat R$ consisting of homologically bounded below complexes 
is denoted by $\dgrcatp R$. Similarly, the notation $\dgrcatn R$ stands for the 
full subcategory of $\dgrcat R$ consisting of homologically bounded above 
complexes.

We would like to warn the reader that the subscripts may represent either homological degree or internal degree. Namely, for a {\it complex} of graded $R$-modules $M$, $M_i$ denotes the module of $M$ at {\it homological degree} $i$. On the other hand, for a graded $R$-{\it module} $N$, $N_i$ denotes the graded component of elements of {\it internal degree} $i$ of $N$. It should be clear from the context how one should interpret the meaning of the subscript.

The notation $\simeq$ signifies isomorphisms in $\dgrcat R$. The notation $\shift$ denotes the functor shift for complexes: $(\shift M)_i=M_{i-1}$ for each $i\in \Z$. For a graded $R$-module $N$ and an integer $i$, the module $N(i)$ is given by: $N(i)_j=N_{i+j}$ for each $j$. More generally, for a {\em complex} of graded $R$-modules $M$ and each integer $i$, the complex $M(i)$ is obtained by shifting the internal degree of each module of $M$ suitably.

Denote by $\inf M$ the infimum of the set $\{i:H_i(M) \neq 0\}$.

For a complex $M\in \dgrcatp R$, let $G$ be the minimal graded free resolution of $M$ in $\dgrcat R$. Let $G_i=\oplus_j R(-j)^{\beta_{i,j}(M)}$ for each $i$. The numbers $\beta_{i,j}(M)$ are called the graded Betti numbers of $M$. The Poincar\'e series is a good way to encode information about the minimal free resolution of a complex. The Poincar\'e series of $M$ is the formal power series $P^R_M(t,y)=\sum_{i\in \Z}\sum_{j\in \Z} \beta_{i,j}(M)t^iy^j \in \Z[[t,y]]$. We refer to \cite{CF}, \cite{W} for more detailed treatments of homological algebra of complexes and to \cite{Jor1}, \cite{Jor2} for the graded setting. See also \cite{Avr} for an in-depth discussions of free resolutions.
\subsection*{Regularity}
Recall that if $R$ is standard graded, then the relative Castelnuovo-Mumford 
regularity {\em over $R$} of a complex $M\in \dgrcatp R$ is defined by
\[
\reg_R M = \sup\{j-i:\beta_{i,j}(M)\neq 0\}.
\]
In the literature, sometimes $\reg_R M$ is referred to as the 
$\Ext$-regularity; see for example \cite{Jor1}, \cite{Jor2}. We would like to 
emphasize that the notion of ``Castelnuovo-Mumford regularity'' defined in the 
above two papers is the {\em absolute} one, when the base ring is a commutative 
graded algebra (see Remark \ref{rem_compare}(ii)). However, most of the time in 
this paper, we will only deal with 
{\em relative} regularity, which generally only coincides with the absolute 
regularity for regular base rings.

The following simple lemma will be used several times in the sequel.
\begin{lem}
\label{reg_sequence}
Let $R$ be a standard graded $k$-algebra.
\begin{enumerate}[\quad \rm(i)]
\item If $0\to M' \to M \to M'' \to 0$ is an exact sequence of finitely generated graded $R$-modules, then there are inequalities
\begin{align*}
\reg_R M  &\le \max\{\reg_R M', \reg_R M''\},\\
\reg_R M' &\le \max\{\reg_R M, \reg_R M''+1\},\\
\reg_R M''&\le \max\{\reg_R M, \reg_R M'-1\}.
\end{align*}
\item \textnormal{(See \cite[Lemma~2.2(b)]{BCR}.)} For an exact sequence of finitely generated graded $R$-modules
$$
\cdots \to P_i \to P_{i-1} \to \cdots \to P_0 \to N \to 0,
$$
we have an inequality
\[
\reg_R N \le \sup\{\reg_R P_i-i: i\ge 0\}.
\]
\end{enumerate}
\end{lem}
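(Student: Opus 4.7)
The plan is to derive both parts from the long exact sequence of $\Tor^R(-,k)$, using the reformulation $\reg_R M = \sup\{n-i : \Tor^R_i(M,k)_n \ne 0\}$ of the regularity as the top slope of non-vanishing Betti numbers.

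For part (i), I would apply $-\otimes_R k$ to the short exact sequence to obtain the standard long exact sequence of graded Tors, and then read off each of the three inequalities from one three-term piece. The piece $\Tor^R_i(M',k)_n\to \Tor^R_i(M,k)_n\to \Tor^R_i(M'',k)_n$ shows that non-vanishing of the middle term forces non-vanishing of a neighbor, which directly yields $n-i\le\max\{\reg_R M',\reg_R M''\}$. The bounds on $\reg_R M'$ and $\reg_R M''$ come in the same way from the pieces
\[
\Tor^R_{i+1}(M'',k)_n\to\Tor^R_i(M',k)_n\to\Tor^R_i(M,k)_n \quad\text{and}\quad \Tor^R_i(M,k)_n\to\Tor^R_i(M'',k)_n\to\Tor^R_{i-1}(M',k)_n,
\]
where the one-step shift in homological degree at one neighbor is exactly what produces the $\pm 1$ in the stated bounds. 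The \emph{in particular} clause is then immediate: the hypothesis $\reg_R M'\ge\reg_R M+1$ forces the maximum in the second inequality to be attained at $\reg_R M''+1$, giving $\reg_R M''\ge\reg_R M'-1$, while the third inequality provides the reverse bound $\reg_R M''\le\reg_R M'-1$.

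For part (ii), I would break the exact complex into short exact sequences via the images $K_0=N$ and $K_i=\img(M_i\to M_{i-1})$ for $i\ge 1$, so that
\[
0\to K_{i+1}\to M_i\to K_i\to 0
\]
is exact for every $i\ge 0$. Iterating the long exact sequence of $\Tor^R(-,k)_n$ along these short exact sequences, any non-zero class in $\Tor^R_j(N,k)_n$ either lifts back to a non-zero class in $\Tor^R_{j-p}(M_p,k)_n$ for some $p\ge 0$, which yields $n-j\le\reg_R M_p - p$, or it propagates strictly downward in homological degree, eventually reaching the group $\Tor^R_{j-(j+1)}(K_{j+1},k)_n=\Tor^R_{-1}(K_{j+1},k)_n=0$. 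Since the latter vanishes, the first alternative must occur for some $p$, producing the desired bound $\reg_R N\le\sup\{\reg_R M_i-i:i\ge 0\}$.

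The argument is essentially bookkeeping with long exact sequences, and I expect no serious obstacles; the only mildly delicate point is the termination of the iteration in part (ii), which relies on the strict decrease of the homological index ensuring that after at most $j+1$ steps one lands in negative homological degree.
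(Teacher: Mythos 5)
Your proof is correct. The paper itself gives no argument for this lemma, treating part (i) as standard and citing \cite[Lemma~2.2(b)]{BCR} for part (ii), so there is no proof in the paper to compare against; the long-exact-sequence-of-$\Tor$ argument you give is exactly the expected standard one, and the splicing into short exact sequences $0\to K_{i+1}\to M_i\to K_i\to 0$ together with the termination observation that $\Tor^R_{-1}=0$ correctly establishes part (ii).
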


\subsection*{Scalar restriction}
We record the following simple fact for later usage. Let $R\to S$ be a ring homomorphism. Every $S$-module is automatically an $R$-module via scalar restriction. Denote by $\dcat R$ the derived category of complexes of $R$-modules. We say that a functor between triangulated categories is exact (or triangulated) if it commutes with the translations and preserves the distinguished triangles (see \cite[Chapter 2]{Ne} for more details).
\begin{lem}
\label{scalar_restr}
The scalar restriction functor from $\dcat S$ to $\dcat R$ is an exact functor.
\end{lem}
\section{The Veronese grading and fractional Veronese}
\label{fract_Veronese}
When $\chara k=p>0$, and $R$ is an $\N$-graded $k$-algebra, the Frobenius $\phi:R\to R$ given by $\phi(a)=a^p$ is a ring homomorphism. However, it is neither $k$-linear nor homogeneous. There are two opposite ways to ``homogenize" the Frobenius: considering Veronese subrings of $R$, and considering the so-called fractional Veronese functor $(\cdot)^{(1/p)}$. To obtain more general results, we will work with a class of ring homomorphisms which strictly contains powers of the Frobenius, namely the homomorphisms with positive orders.
\subsection*{The Veronese grading}
The following type of homomorphisms will frequently appear in this paper.
\label{Veronese_gradings}
\begin{defn}
Let $\vphi: (R,\mm,k)\to (S,\nn,l)$ be a homomorphism of algebras such that $\deg \vphi(a)=d \deg(a)$ for every homogeneous element $a\in R$ where $d\ge 1$ is an integral constant. Then we say that $d$ is the {\em order} of $\vphi$ and write $\order \vphi=d$.
\end{defn}
For example, the power $\phi^e$ of Frobenius endomorphism has order $q=p^e$ for each $e\ge 1$. Homogeneous morphisms are simply those of order $1$. Clearly if $\vphi$ is a homomorphism of order $d$, then we have an induced {\em homogeneous} morphism $R\to S^{(d)}$.

Let $M$ be a graded $S$-module. Consider the $d$-th Veronese modules 
$V_i(d,M)=\oplus_{u\in \Z}M_{du+i}$ for $0\le i\le d-1$. These are graded 
modules over the Veronese ring $S^{(d)}$. We will still use $M$ to denote the 
graded $S^{(d)}$-module $\oplus_{i=0}^{d-1}V_i(d,M)$, which is the abelian 
group $M$ itself with a modified grading.

We denote by $\up{\vphi}M$ the {\em $R$-module} 
 $M=\oplus_{i=0}^{d-1}V_i(d,M)$ with the $R$-action induced by scalar 
restriction along $R\to S^{(d)}$. Hence the grading of $\up{\vphi}M$ is given by
$$
\deg M_{du+i}=u,
$$
and we call it the Veronese grading. In the case $\chara k=p>0$, $S=R$ and 
$\vphi=\phi^e$, we obtain the Veronese grading for $\up{\phi^e}M$ mentioned in 
the introduction.

\subsection*{Fractional Veronese}
Let $R$ be an $\N$-graded algebra over $k$, such that $R$ is generated in a single degree $g \ge 1$. Let $M$ be a graded $R$-module and $s\ge 1$ an integer. Define $R^{(1/s)}$ to be the ring $R$ with a new grading given by: $R^{(1/s)}_{si}=R_i$ for all $i\in \Z$ and $R^{(1/s)}_j=0$ for all $j$ not divisible by $s$. We call $R^{(1/s)}$ the {\it $s$-th fractional Veronese} of $R$. A similar definition holds for $M^{(1/s)}$, and the latter is an $R^{(1/s)}$-module. Furthermore, $R^{(1/s)}$ is a $k$-algebra generated in degree $sg$, and the fractional Veronese functor $(\cdot)^{(1/s)}$ from $\dgrcat R$ to $\dgrcat {R^{(1/s)}}$ is exact.

Endomorphisms of order $\ge 1$ and fractional Veronese are not new notions, for instances they appeared (without names) in the work of Koh and Lee \cite[p.~ 686]{KL}. Fractional Veronese keeps all the information about the ring $R$ and the module $M$, except for the grading. This is the difference between the fractional Veronese and the Veronese functor.

Note that if $\vphi: R\to S$ has order $d$, then one gets an induced homogeneous ring homomorphism $R^{(1/d)}\to S$, denoted by $\widetilde{\vphi}$. Any $S$-module is naturally a module over $R^{(1/d)}$. Given $d\ge 1$ and an endomorphism $\psi: R\to R$ of order $d$, any graded $R$-module is naturally an $R^{(1/d)}$-module via $\widetilde \psi$. We will construct a functor relating graded modules as well as regularity of complexes over the rings $R^{(1/d)}$ and $R$.
\begin{defn}
\label{defn_functor}
Let $N$ be a graded $R^{(1/d)}$-module. The $d$-th Veronese modules of $N$ are 
naturally graded {\em $R$-modules} via the induced action: if $a\in R$ and 
$x\in V_i(d,N)$ where $0\le i\le d-1$, we define $a\circ x=ax$, where the 
multiplication with scalar on the right-hand side is that of $R^{(1/d)}$. In 
fact, if $a\in R_s$ and $x\in N_{dr+i}$ (so $\deg x=r$ in $V_i(d,N)$), then 
$a\in R^{(1/d)}_{ds}$ and hence $ax \in N_{d(s+r)+i}$. Thus $\deg (a\circ 
x)=s+r$ in $V_i(d,N)$, as expected. We define $\Phi(N)$ to be the graded 
$R$-module $\oplus_{i=0}^{d-1}V_i(d,N)$ with the above $\circ$ action. Then 
define the image via $\Phi$ of  complexes of graded $R^{(1/d)}$-modules and 
morphisms between such complexes in a natural way.
\end{defn}
The important properties of $\Phi$ are stated in the following
\begin{lem}
\label{functor_varPhi}
The functor $\Phi$ from $\dgrcat {R^{(1/d)}}$ to $\dgrcat R$ is an exact functor. It is a left inverse of the fractional Veronese functor $(\cdot)^{(1/d)}$. Moreover, the $R$-module $\Phi(R^{(1/d)})$ has a natural structure of a graded $k$-algebra and there is a homogeneous isomorphism  $\Phi(R^{(1/d)})\cong R$ of graded algebras.
\end{lem}
\begin{proof}
That $\Phi$ is exact follows from Lemma \ref{scalar_restr} applied to the 
natural homomorphism $R\to R^{(1/d)}$. For any graded $R$-module $M$, one 
easily verifies that $\Phi(M^{(1/d)})_i=M_i$ for all $i\in \Z$. Hence $\Phi$ is 
a left inverse of $d$-th fractional Veronese.

Given $j\in \Z$, there are equalities
\[
\Phi(R^{(1/d)})_j= \oplus_{i=0}^{d-1}(R^{(1/d)})_{jd+i} = (R^{(1/d)})_{jd} =R_j.
\]
The $k$-algebra structure of $R^{(1/d)}$ naturally induces such a structure for $\Phi(R^{(1/d)})$. Hence we conclude that $\Phi(R^{(1/d)})\cong R$. Therefore the proof is completed.
\end{proof}
\section{Regularity over homomorphisms}
\label{reg_over_morphism}
In this section, we begin studying the notion of regularity over a homomorphism. Since the main applications we have in mind are for standard graded algebras in positive characteristics, we restrict ourself to working over $\N$-graded algebras which are generated in a single positive degree. We believe that it is possible to define regularity over homomorphisms for arbitrarily graded algebras, however that idea will not be pursued here. 
\begin{rem}
\label{Frobenius_finite}
Assume that $\chara k=p>0$ and $k$ is $F$-finite. For every $e>0$, $\phi^e$ is a finite morphism. Namely, if $R=k[x_1,\ldots,x_n]/I$ is a presentation of $R$, where $I$ is a homogeneous ideal of the polynomial ring $k[x_1,\ldots,x_n]$, then $\up{\phi^e}R$ is generated by the classes of the generators of $k$ as a $\phi^e(k)$-module  and the classes of $x_1^{a_1}\cdots x_n^{a_n}$, where $0\le a_j\le q-1$ for all $j=1,\ldots,n$.
\end{rem}
In this section, we will work with a (not necessarily finite) homomorphism $\vphi: (R,\mm,k)\to (S,\nn,l)$ of $\N$-graded rings with order $d\ge 1$. Assume that $R$ is generated in degree $g$ and $S$ is generated in degree $h$, where $g, h\ge 1$. Let $M\in \dgrcatp S$ be a complex of graded $S$-modules. 

\subsection*{Homotopy annihilator}
For our purpose, it is useful to recall the notion of the {\em homotopy annihilator} of a complex due to Apassov \cite{Apa2}.

Let $P$ be a free resolution of $M$. For any $a\in S$, the multiplication with $a$ induces an element of $\Ext^0_S(P,P)\cong \Ext^0_S(M,M)$. Composition of morphisms gives the latter the structure of a graded ring, therefore we have a homogeneous ring homomorphism $\eta_S: S\to \Ext^0_S(M,M)$. Denote by $\Ann_{\dcat{S}}M$ the homogeneous ideal $\Ker \eta_S$, which is called the homotopy annihilator of $M$. The homotopy annihilator is well-defined in the derived category $\dcat S$ (and in the graded category $\dgrcat S$). If $M$ is an $S$-module then $\Ann_{\dcat S}M$ is the usual annihilator ideal $\Ann_S M$ of $M$.

Since homotopic maps induce the same morphism on homology of a complex, we see that $\Ann_{\dcat{S}}M \subseteq \Ann_S H_i(M)$ for all $i\in \Z$. The proof of \cite[(1.5.6)]{AIM} applies verbatim to give the following result, even though our complex $M$ does not have bounded homology.
\begin{lem}
\label{lem_ann_dtensor}
Let $L\in \dgrcatp R$ be any complex and $\bsy$ a finite sequence of homogeneous elements in $S$. Then we have
\[
\bsy S +(\Ann_{\dcat R}L)S+\Ann_{\dcat S} M\subseteq \Ann_{\dcat S} (L\dtensor R 
K[\bsy;M]),
\]
where $K[\bsy;M]$ denotes the Koszul complex on $\bsy$ with coefficients in $M$.
\end{lem}

\subsection*{Regularity over homomorphisms}
Let us define Betti numbers and Poincar\'e series of $M$ over the homomorphism $\vphi$, following the construction in the local setting in \cite[Section 2]{AHIY}.

Recall that $\vphi$ induces a homogeneous homomorphism $\widetilde{\vphi}: 
R^{(1/d)} \to S$ and $R^{(1/d)}$ is generated in degree $dg$. Since $S$ is 
generated in degree $h$, we infer that $dg=hc$ for some integer $c\ge 1$. By 
abuse of notation, we write $\mm S$ for the extension ideal $\mm^{(1/d)}S$ with 
respect to the homomorphism $\widetilde{\vphi}$. By hypothesis, obviously $\mm 
S\subseteq \nn^c$, where $\nn^c\subseteq S$ denotes the $c$-th power of the 
ideal $\nn$. Note that $\nn^c$ is generated in degree $hc=dg$. Let 
$\bsx=x_1,\ldots,x_n$ be a minimal system of generators of the ideal $\nn^c$ 
modulo $\mm S$, such that $\deg x_i=dg$ for all $i=1,\ldots,n$. 

Let $K[\bsx;M] = K[\bsx;S] \dtensor {S} M$ denote the Koszul complex of $M$ with respect to the sequence $\bsx$ in $S$ (see \cite{AIM} for more information). For each $i$, $H_i(k\dtensor {R^{(1/d)}} K[\bsx;M])\cong \Tor^{R^{(1/d)}}_i(k,K[\bsx;M])$ is a finitely generated $S$-module. Moreover, by Lemma \ref{lem_ann_dtensor}, 
\[
\nn^c=\mm S+\bsx S \subseteq \Ann_{\dcat S} (k\dtensor {R^{(1/d)}} K[\bsx;M]),
\]
hence $H_i(k\dtensor {R^{(1/d)}} K[\bsx;M])$ is also an $S/\nn^c$-module. Therefore $\dim_l H_i(k\dtensor {R^{(1/d)}} K[\bsx;M])<\infty$ for all $i$. By definition, for each $i,j\in \Z$, the $(i,j)$ graded Betti number of $M$ over $\vphi$ is
\[
\beta^{\vphi}_{i,j}(M)=\dim_l H_i(k\dtensor {R^{(1/d)}} K[\bsx;M])_j.
\]
We will see that $\beta^{\vphi}_{i,j}(M)$ is well-defined in Proposition 
\ref{non_minimal_gen}. Given $i$, for all but finitely many $j$, we have 
$\beta^{\vphi}_{i,j}(M)=0$.
\begin{defn}
The {\em regularity of $M$ over the homomorphism $\vphi$}, denoted by 
$\reg_{\vphi}M$, is defined as follows:
\[
\reg_{\vphi}M= \sup\left\{\frac{j-idg}{dg}:\beta^{\vphi}_{i,j}(M)\neq 0 \right\}.
\]
\end{defn}
\begin{rem}
\label{rem_compare}
(i) If $\vphi=\id^R$ is the identity of $R$, we will denote $\reg_{\id^R}M$ simply by $\reg_R M$. It is clear from the definition that for every $m\in \Z$,
\begin{equation*}
\reg_R M(-m)=\reg_R M + \frac{m}{g}.
\end{equation*}
If additionally $R$ is standard graded then $\beta^{\vphi}_{i,j}(M)$ is the usual $(i,j)$-Betti number of $M$ over $R$ and $\reg_R M$ is the usual Castelnuo\-vo-Mumford regularity of $M$.

(ii) Let $(S,\nn)$ be a standard graded $l$-algebra and $M$ a finitely 
generated graded module over $S$. The absolute Castelnuovo-Mumford regularity 
of $M$ is defined by
$$
\reg M=\sup\{i+j:H^i_{\nn}(M)_j\neq 0\},
$$
where $H^i_{\nn}(M)$ denotes the $i$-th local cohomology of $M$ with support at 
$\nn$. Recall that if $Q$ is any polynomial ring over $l$ and $Q\to S$ is a
surjection of standard graded $l$-algebras, then $\reg M=\reg_Q M$. Let $\tau$ 
be the natural map $l\to S$, and $\bsx$ be a minimal system of generators of 
$\nn$. Then 
\[
\beta^{\tau}_{i,j}(M)=\dim_l H_i(K[\bsx;M])_j,
\]
for all $i,j$. This implies that $\reg_{\tau}M=\reg M$. Hence the absolute 
regularity is a special case of regularity over homomorphisms.

(iii) If $R, S$ are standard graded algebras and $\vphi: R\to S$ is a finite homogeneous morphism, then $\reg_{\vphi}M$ is equal to $\reg_R M$, the regularity of $M$ viewed as a complex of $R$-modules via scalar restriction; see Remark \ref{artinian_case} below. 
\end{rem}
The following proposition shows that graded Betti numbers and regularity of $M$ {\em over $\vphi$} do not depend on the choice of minimal generators of $\nn^c/\mm S$. Moreover, we can compute the regularity of $M$ over $\vphi$ by choosing any (minimal or not) generating set of $\nn^c$ modulo $\mm S$ consisting of elements of degree $dg$. In the result below, the equality between formal series is a graded analog of \cite[Proposition 4.3.1]{AIM}. We will provide an argument for the sake of completeness. 
\begin{prop}
\label{non_minimal_gen}
Let $\bsz=z_1,\ldots,z_\ell$ be a sequence of elements having degree $dg$ which generates $\nn^c$ modulo $\mm S$. Denote $\beta^{\bsz}_{i,j}(M)=\dim_l H_i(k\dtensor {R^{(1/d)}} K[\bsz;M])_j$ for each $i,j\in \Z$, and
$$
P^{\vphi,\bsz}_M(t,y)=\sum_i\sum_j \beta^{\bsz}_{i,j}(M)t^iy^j\in \Z[[t,y]]
$$
the corresponding generating function. Denote
\[
\reg_{\vphi,\bsz}M=\sup\left\{\frac{j-idg}{dg}:\beta^{\bsz}_{i,j}(M)\neq 0 \right\}.
\]
Let $n$ be the minimal number of homogeneous generators of $\nn^c/\mm S$. Then for any set $\bsx=x_1,\ldots,x_n$ of minimal generators with degree $dg$ of $\nn^c$ modulo $\mm S$, we have
\[
P^{\vphi,\bsz}_M(t,y)=P^{\vphi,\bsx}_M(t,y)(1+ty^{dg})^{\ell-n},
\]
and $\reg_{\vphi,\bsz}M=\reg_{\vphi,\bsx}M$.
\end{prop}
\begin{proof}
Firstly, let $\bsu=u_1,\ldots,u_r$ be a minimal system of generators having degree $g$ of $\mm$. Let $\bsx=x_1,\ldots,x_n$ be a minimal system of generators having degree $dg$ of $\nn^c$ modulo $\mm S$.

Denote by $\overline{\bsu}$ the residue class of $\bsu \subseteq R^{(1/d)}$ in $k$, then we have the third isomorphism in the following chain
\begin{align*}
k\dtensor{R^{(1/d)}}K[\bsz,\vphi(\bsu);M]
&\simeq k\dtensor{R^{(1/d)}}\left(K[\vphi(\bsu);S]\dtensor{S} K[\bsz;M]\right)\\
&\simeq \left(k \dtensor{R^{(1/d)}}K[\bsu;R^{(1/d)}]\right) \dtensor{R^{(1/d)}} 
K[\bsz;M]\\
&\simeq K[\overline{\bsu};k] \dtensor{R^{(1/d)}} K[\bsz;M] \\
&\simeq 
\left(\bigoplus_{u=0}^r\Sigma^uk(-udg)^{\binom{r}{ 
u}}\right)\dtensor{R^{(1/d)}}K[\bsz;M]\\
&\simeq \bigoplus_{u=0}^r 
\Sigma^u \left(k\dtensor{R^{(1/d)}}K[\bsz;M]\right)(-udg)^{\binom{r}{u}}.
\end{align*}
The fourth isomorphism holds since $K[\overline{\bsu};k]$ has differential $0$. From the above chain, we obtain
\[
P^{\vphi,\bsz,\vphi(\bsu)}_M(t,y)=(1+ty^{dg})^rP^{\vphi,\bsz}_M(t,y).
\]
This equality also implies that
\[
P^{\vphi,\bsx,\vphi(\bsu)}_M(t,y)=(1+ty^{dg})^rP^{\vphi,\bsx}_M(t,y).
\]
Secondly, note that $\nn^c$ is generated by the sequence $\bsz,\vphi(\bsu)$. Similar statement holds for the sequence $\bsx,\vphi(\bsu)$. Choose $\bsv=v_1,\ldots,v_m$ a minimal system of generators with degree $dg$ of $\nn^c$. Let $A$ be the graded Koszul complex on $(\ell+r-m)$ zeros, regarded elements of degree $dg$ of $S$. By standard arguments, we have
\[
K[\bsz,\vphi(\bsu);M]=K[\bsv;M]\otimes_S A.
\]
This yields
\[
P^{\vphi,\bsz,\vphi(\bsu)}_M(t,y)=(1+ty^{dg})^{\ell+r-m}P^{\vphi,\bsv}_M(t,y).
\]
Combining with the similar identity for $P^{\vphi,\bsx,\vphi(\bsu)}_M(t,y)$ and the fact that $\ell\ge n$, we get
\[
P^{\vphi,\bsz,\vphi(\bsu)}_M(t,y)=(1+ty^{dg})^{\ell-n}P^{\vphi,\bsx,\vphi(\bsu)}_M(t,y).
\]
This gives us the desired equality of formal power series
\[
P^{\vphi,\bsz}_M(t,y)=(1+ty^{dg})^{\ell-n}P^{\vphi,\bsx}_M(t,y).
\]
In detail, this means that for all $i,j$,
\[
\beta^{\bsz}_{i,j}(M) = \sum_{u=0}^{\ell-n} \binom{\ell-n}{u}\beta^{\bsx}_{i-u,j-udg}(M).
\]
Therefore
\[
\reg_{\vphi,\bsz}M=\sup\left\{\frac{j-idg}{dg}:\beta^{\bsz}_{i,j}(M)\neq 0 \right\}=\reg_{\vphi,\bsx}M,
\]
as claimed.
\end{proof}
The regularity over a homomorphism is unchanged by tensoring with appropriate Koszul complexes.
\begin{lem}
\label{Koszul_invariance}
For any finite sequence $\bsv$ of elements of degree $dg$ in $S$, we have
\[
\reg_{\vphi}M=\reg_{\vphi}K[\bsv;M].
\]
\end{lem}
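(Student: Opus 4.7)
The plan is to reduce both $\reg_{\vphi} M$ and $\reg_{\vphi} K[\bsv;M]$ to one and the same quantity, namely the regularity computed from the concatenated sequence $\bsw = (\bsx,\bsv)$, where $\bsx$ is a minimal degree-$dg$ generating sequence of $\nn^c$ modulo $\mm S$ as in the definition of $\reg_\vphi$.

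First I would verify that $\bsw$ is a legitimate (non-minimal) generating sequence to which Proposition \ref{non_minimal_gen} applies. The point is that since $S$ is generated in degree $h$ and $dg = hc$, we have $S_{dg}\subseteq \nn^c$, so every entry of $\bsv$ automatically lies in $\nn^c$. Hence $\bsw$ consists of homogeneous elements of degree $dg$ and generates $\nn^c$ modulo $\mm S$ (the entries of $\bsv$ being redundant generators on top of $\bsx$). Proposition \ref{non_minimal_gen} then gives
$$
\reg_{\vphi,\bsw} M = \reg_{\vphi} M.
$$

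Next I would use the standard identification of Koszul complexes of concatenated sequences to connect $\reg_{\vphi} K[\bsv;M]$ with $\reg_{\vphi,\bsw} M$. Concretely, for any $S$-complex $N$ one has $K[\bsx;N] \cong K[\bsx;S]\otimes_S N$, so applying this to $N = K[\bsv;M]$ yields
$$
K[\bsx;K[\bsv;M]] \cong K[\bsx;S]\otimes_S K[\bsv;S]\otimes_S M \cong K[\bsw;M]
$$
as complexes of graded $R^{(1/d)}$-modules. Tensoring with $k$ over $R^{(1/d)}$ and taking homology then gives
$$
\beta^{\vphi}_{i,j}(K[\bsv;M]) = \dim_l H_i\bigl(k\dtensor{R^{(1/d)}} K[\bsw;M]\bigr)_j = \beta^{\bsw}_{i,j}(M)
$$
for every $i,j$, so $\reg_{\vphi} K[\bsv;M] = \reg_{\vphi,\bsw} M$. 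Combining this with the previous display yields $\reg_{\vphi} K[\bsv;M] = \reg_{\vphi} M$.

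The only real point that needs attention is the very first step, namely noticing that the degree condition $\deg v_i = dg = hc$ forces $v_i\in\nn^c$ and hence forces $\bsw$ to be a generating sequence of $\nn^c$ modulo $\mm S$; without this, Proposition \ref{non_minimal_gen} would not be available. Once that observation is made, the argument reduces to routine manipulations with Koszul complexes.
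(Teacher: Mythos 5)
Your argument is correct and follows essentially the same route as the paper's own proof: the paper also rewrites $K[\bsx;K[\bsv;M]] \cong K[\bsx,\bsv;M]$ and then appeals to Proposition \ref{non_minimal_gen} applied to the concatenated sequence. You merely spell out one small point the paper leaves implicit, namely that since $S$ is generated in degree $h$ and $dg = hc$, we have $\nn^c = \bigoplus_{j\ge dg}S_j$, so the entries of $\bsv$ lie in $\nn^c$ and $(\bsx,\bsv)$ is indeed a (non-minimal) generating sequence to which Proposition \ref{non_minimal_gen} applies.
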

\begin{proof}
Let $\bsx$ be a minimal generating set of elements of degree $dg$ for the module $\nn^c/\mm S$. Then
\[
K[\bsx;K[\bsv;M]] \cong K[\bsx,\bsv;M],
\]
hence the conclusion follows by an application of Proposition \ref{non_minimal_gen}.
\end{proof}

The following theorem is the main result in Section \ref{reg_over_morphism}. It has the same spirit as \cite[Theorem 7.2.3]{AIM} but the proof requires a precise bookkeeping of degrees, so we will carry out the details.
\begin{thm}
\label{finite_length}
Let $\bsy$ be a finite sequence of elements of degree $dg$ in $S$. Assume that there exists an integer $r\ge 1$ such that
\[
\nn^r\subseteq \bsy S+\mm S+\Ann_{\dcat{S}}M.
\]
Then there is an equality
\[
\reg_{\vphi}M = \sup\left\{\frac{j-idg}{dg}: H_i(k\dtensor {R^{(1/d)}}K[\bsy;M])_j \neq 0 \right\}.
\]
\end{thm}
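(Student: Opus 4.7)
The plan is to reduce the equality to a stability statement under iterated Koszul constructions, then prove that stability using a long exact sequence together with a nilpotence argument driven by the finite length hypothesis.

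First, let $\bsx = x_1, \ldots, x_n$ be a minimal generating set of $\nn^c/\mm S$ of degree $dg$. The concatenation $\bsy, \bsx$ still generates $\nn^c$ modulo $\mm S$, so Proposition \ref{non_minimal_gen} yields
\[\reg_\vphi M = \sup\left\{\frac{j-idg}{dg} : H_i(k \dtensor{R^{(1/d)}} K[\bsy, \bsx; M])_j \neq 0\right\}.\]
Using the isomorphism $K[\bsy, \bsx; M] \cong K[\bsx; K[\bsy; M]]$ and setting $N := K[\bsy; M]$, the theorem reduces to the single claim
\[\sup\left\{\frac{j-idg}{dg} : H_i(k \dtensor{R^{(1/d)}} K[\bsx; N])_j \neq 0\right\} = \sup\left\{\frac{j-idg}{dg} : H_i(k \dtensor{R^{(1/d)}} N)_j \neq 0\right\}.\]

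Next, I would establish two finiteness results. The first is that $H(N)/\mm H(N)$ has finite length over $S$: this follows from the first-quadrant spectral sequence $E^2_{p,q} = H_p(K[\bsy; H_q(M)]) \Rightarrow H_{p+q}(N)$ associated to $K[\bsy;S] \otimes_S M$, together with the general fact that every Koszul homology $H_p(K[\bsy; H_q(M)])$ has support contained in $\Supp(H_q(M)/\bsy H_q(M))$, which is zero-dimensional modulo $\mm$ by hypothesis; a standard filtration argument then promotes the finite length from the $E^2$ page to the abutment. The second consequence, via graded Nakayama, is that each $H_q(N)$ is finitely generated over $R^{(1/d)}$; hence each $\Tor^{R^{(1/d)}}_p(k, H_q(N))$ is finite-dimensional over $k$, and the spectral sequence $\Tor^{R^{(1/d)}}_p(k, H_q(N)) \Rightarrow \Tor^{R^{(1/d)}}_{p+q}(k, N)$ (which has only finitely many nonzero entries in each total degree, since $N \in \dgrcatp{R^{(1/d)}}$) shows that $T_i := \Tor^{R^{(1/d)}}_i(k, N)$ is finite-dimensional over $k$. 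The same reasoning applies with $N$ replaced by any intermediate Koszul complex $N_s := K[x_1, \ldots, x_s; N]$.

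The heart of the proof is an induction on $s = 1, \ldots, n$ showing that the "bare" Tor regularity $\rho(N_s) := \sup\{(j-idg)/dg : H_i(k \dtensor{R^{(1/d)}} N_s)_j \neq 0\}$ equals $\rho(N_{s-1})$. Since $N_s$ is the mapping cone of $N_{s-1}(-dg) \xrightarrow{x_s} N_{s-1}$, one obtains the long exact sequence
\[\cdots \to T^{s-1}_i(j-dg) \xrightarrow{x_s} T^{s-1}_i(j) \to T^s_i(j) \to T^{s-1}_{i-1}(j-dg) \xrightarrow{x_s} T^{s-1}_{i-1}(j) \to \cdots\]
where $T^s_i = \Tor^{R^{(1/d)}}_i(k, N_s)$. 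The inequality $\rho(N_s) \leq \rho(N_{s-1})$ is immediate: a non-zero $T^s_i(j)$ forces either the cokernel of $x_s$ on $T^{s-1}_i$ or the kernel of $x_s$ on $T^{s-1}_{i-1}(-dg)$ to be non-zero at degree $j$, and both contribute the same value $(j-idg)/dg$ to $\rho(N_{s-1})$. For the reverse inequality, finite-dimensionality of each $T^{s-1}_i$ is decisive: for every $i$ with $T^{s-1}_i \neq 0$ there is a maximal $j_i$ with $T^{s-1}_i(j_i) \neq 0$, so the map $x_s \colon T^{s-1}_i(j_i) \to T^{s-1}_i(j_i+dg) = 0$ is zero, and the long exact sequence produces a non-zero $T^s_{i+1}(j_i+dg)$, contributing the value $(j_i-idg)/dg$ to $\rho(N_s)$; taking the supremum over $i$ gives the conclusion.

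The main obstacle is the reverse inequality in the induction step: it rests essentially on the finite-dimensionality of each $T^{s-1}_i$ (used to locate a top non-vanishing internal degree), which is where the finite length hypothesis genuinely enters via graded Nakayama. Once the induction is set up, iterating $n$ times gives $\rho(N_n) = \rho(N_0)$, which is precisely the equality needed to conclude the proof.
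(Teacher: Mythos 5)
Your overall strategy coincides with the paper's: reduce via Proposition \ref{non_minimal_gen} and Lemma \ref{Koszul_invariance} to a stability claim for appending Koszul variables one at a time, and establish stability through the mapping-cone long exact sequence, with the two inequalities $\rho(N_s)\le\rho(N_{s-1})$ and $\rho(N_{s-1})\le\rho(N_s)$. Your handling of the reverse inequality is actually cleaner than the paper's Lemma \ref{triangle}: you simply pick the top internal degree $j_i$ of $T^{s-1}_i$ and observe that $x_s$ annihilates that top graded piece, whereas the paper routes this through an explicit nilpotence bound $H_i(\theta)^{t_i}=0$ and a chain of length inequalities. Both yield the same conclusion, and your version avoids tracking $t_i$ explicitly.

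There is, however, a gap in how you justify the finite-dimensionality that your argument hinges on. You deduce ``$H_q(N)$ is finitely generated over $R^{(1/d)}$'' from graded Nakayama and then conclude each $\Tor^{R^{(1/d)}}_p(k,H_q(N))$ is finite-dimensional over $k$. But graded Nakayama over $R^{(1/d)}$ requires $H_q(N)/\mm H_q(N)$ to be finite-dimensional over the residue field $k$ of $R^{(1/d)}$, and the hypothesis only gives finite $S$-length, i.e.\ finite dimension over $l$. Since $\vphi$ is \emph{not} assumed finite, the field extension $k\to l$ can be infinite, and then $H_q(N)$ need not be finitely generated over $R^{(1/d)}$ (for instance $R=k[x]$, $S=l[y]$, $x\mapsto y^2$, $M=S$, $\bsy=\emptyset$). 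What you actually need is weaker and is what the paper establishes (following \cite[Theorem~7.2.3]{AIM}): each $H_i(k\dtensor{R^{(1/d)}}K^{(\nu)})$ is a finitely generated $S$-module annihilated by some power $\nn^{t_i}$, hence finite-dimensional over $l$, hence supported in a bounded range of internal degrees. That boundedness is all your ``top degree $j_i$'' argument uses, so once the finiteness lemma is routed through the annihilation-by-$\nn$-powers argument rather than through finite generation over $R^{(1/d)}$, your induction goes through exactly as written.
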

\begin{rem}
\label{artinian_case}
If $S/\mm S$ is an artinian ring, then we can choose $\bsy=\emptyset$ in Theorem \ref{finite_length}. In particular, 
\[
\reg_{\vphi}M = \sup\left\{\frac{j-idg}{dg}: H_i(k\dtensor {R^{(1/d)}}M)_j \neq 0 \right\}.
\]
If moreover, $R\to S$ is a finite morphism so that $M$ has degreewise finite 
homology over $R$, then the last equality says that 
$\reg_{\vphi}M=\reg_{R^{(1/d)}} M$.
\end{rem}
We start by proving the following result; it was suggested by Lemma 1.2.3 in \cite{AIM}.
\begin{lem}
\label{triangle}
Let $X(-s)\xrightarrow{\theta}X\to C \to $ be a triangle in $\dgrcat S$ where $s\ge 1$. Assume that the self-composition map $H(\theta)^r:H(X)(-rs)\to H(X)$ is zero for some $r\ge 1$ and $H_i(X)$ has finite length for each $i$. Then for each $i,j$, there are inequalities
\begin{align*}
&\ell (H_i(C)_j)\le \ell(H_i(X)_j)+\ell(H_{i-1}(X)_{j-s}),\\
&\ell (H_i(X)_j)\le \sum_{m=0}^{r-1}\ell(H_{i+1}(C)_{j+(m+1)s}),
\end{align*}
where for a finite length $S$-module $N$, $\ell(N)$ denotes $\dim_l N$.
\end{lem}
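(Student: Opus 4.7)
The plan is to derive both inequalities from the long exact homology sequence associated to the distinguished triangle $X(-s) \xrightarrow{\theta} X \to C \to$, namely
\begin{equation*}
\cdots \to H_{i+1}(C) \to H_i(X)(-s) \xrightarrow{H_i(\theta)} H_i(X) \to H_i(C) \to H_{i-1}(X)(-s) \xrightarrow{H_{i-1}(\theta)} H_{i-1}(X) \to \cdots.
\end{equation*}
Finite length of $H(X)$ ensures all lengths in what follows are finite.

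For the first inequality, extract from the long exact sequence the short exact sequence
\begin{equation*}
0 \to \mathrm{coker}(H_i(\theta)) \to H_i(C) \to \ker(H_{i-1}(\theta)) \to 0.
\end{equation*}
Passing to degree $j$ and applying additivity of length, the claim follows since $\mathrm{coker}(H_i(\theta))_j$ is a quotient of $H_i(X)_j$ and $\ker(H_{i-1}(\theta))_j$ injects into $H_{i-1}(X)(-s)_j = H_{i-1}(X)_{j-s}$.

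The second inequality is the heart of the matter, and is where the hypothesis $H_i(\theta)^t = 0$ is exploited. Abbreviate $A = H_i(X)$ and, for each integer $k$, write $\mu_k \colon A_k \to A_{k+s}$ for the map given by the degree $k+s$ component of $H_i(\theta) \colon H_i(X)(-s) \to H_i(X)$. From the exact strand $H_{i+1}(C)_{k+s} \to A_k \xrightarrow{\mu_k} A_{k+s}$ we read off $\ell(\ker \mu_k) \le \ell(H_{i+1}(C)_{k+s})$. The vanishing of $H_i(\theta)^t$ means exactly that $\mu_{j+(t-1)s} \circ \cdots \circ \mu_j = 0$ on $A_j$. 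Define $L_m := \mathrm{image}(\mu_{j+(m-1)s} \circ \cdots \circ \mu_j) \subseteq A_{j+ms}$ for $0 \le m \le t$, obtaining surjections $A_j = L_0 \twoheadrightarrow L_1 \twoheadrightarrow \cdots \twoheadrightarrow L_t = 0$ in which the $m$-th arrow is induced by $\mu_{j+ms}$. Since the kernel of $L_m \twoheadrightarrow L_{m+1}$ is contained in $\ker \mu_{j+ms}$, one has
\begin{equation*}
\ell(L_m) - \ell(L_{m+1}) \le \ell(\ker \mu_{j+ms}) \le \ell(H_{i+1}(C)_{j+(m+1)s}),
\end{equation*}
and summing from $m = 0$ to $m = t-1$ telescopes the left side to $\ell(A_j) = \ell(H_i(X)_j)$, yielding the required bound.

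The main obstacle is not conceptual but bookkeeping: all the identifications $H_*(X)(-s)_k = H_*(X)_{k-s}$ must be tracked consistently when restricting the long exact sequence to individual graded strands, and one has to realize that the filtration constructed for the second inequality lives across the degrees $j, j+s, \ldots, j+ts$ of the single module $A$ rather than inside one degree.
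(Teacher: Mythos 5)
Your proof is correct and follows essentially the same route as the paper: the first inequality comes from the short exact sequence $0 \to \Coker H_i(\theta) \to H_i(C) \to \Ker H_{i-1}(\theta) \to 0$ extracted from the long exact homology sequence, and the second from telescoping the lengths of successive images of the compositions $\mu_{j+(m-1)s}\circ\cdots\circ\mu_j$ against $\ell(\Ker\mu_{j+ms})\le\ell(H_{i+1}(C)_{j+(m+1)s})$. Your write-up is in fact slightly more explicit than the paper's, which leaves the telescoping step as "examining the sequence $\ldots$ we obtain".
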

\begin{proof}
Denote $\alpha_{i,j}=H_i(\theta)_j:H_i(X)_j \to H_i(X)_{j+s}$. For each $i,j$, we have exact sequences
\[
0 \to \Coker \alpha_{i,j-s} \to H_i(C)_j\to \Ker \alpha_{i-1,j-s} \to 0,
\]
and
\[
0\to \Ker \alpha_{i,j-s} \to H_i(X)_{j-s} \xrightarrow{\alpha_{i,j-s}}H_i(X)_j \to \Coker \alpha_{i,j-s} \to 0.
\]
So there are formulas
\begin{align}
\begin{split}
&\ell(H_i(C)_j)= \ell(\Coker \alpha_{i,j-s}) +\ell(\Ker \alpha_{i-1,j-s}), \label{ineq_H_iC}\\
&\ell (\Coker \alpha_{i,j-s}) \le \ell(H_i(X)_j),\\
&\ell (\Ker \alpha_{i,j-s}) \le \ell(H_i(X)_{j-s}).
\end{split}
\end{align}
From \eqref{ineq_H_iC}, it is clear that
\[
\ell(H_i(C)_j)\le \ell(H_i(X)_j)+ \ell(H_{i-1}(X)_{j-s}).
\]
By hypothesis, $\alpha_{i,j+(r-1)s}\circ \cdots \circ \alpha_{i,j+s}\circ \alpha_{i,j}=0$. Therefore examining the sequence
\[
H_i(X)_j \xrightarrow{\alpha_{i,j}} H_i(X)_{j+s} \to \cdots \to H_i(X)_{j+(r-1)s} \xrightarrow{\alpha_{i,j+(r-1)s}} H_i(X)_{j+rs},
\]
we obtain
\[
\ell(H_i(X)_j) \le \sum_{m=0}^{r-1}\ell(\Ker \alpha_{i,j+ms}).
\]
Combining with \eqref{ineq_H_iC}, we finish the proof of the lemma.
\end{proof}

\begin{proof}[Proof of Theorem \ref{finite_length}]
Denote $L=K[\bsy;M]$. Using Lemma \ref{Koszul_invariance}, we have $\reg_{\vphi}M=\reg_{\vphi}L$. So it is enough to show
\begin{equation}
\label{key_ineq}
\reg_{\vphi}L = \sup\left\{\frac{j-idg}{dg}: H_i(k\dtensor {R^{(1/d)}}L)_j \neq 0 \right\}.
\end{equation}
Let $\bsx=x_1,\ldots,x_n$ be a minimal generating set of degree-$dg$ elements of $\nn^c$ modulo $\mm S$. Denote $L^{(\nu)}=K[x_1,\ldots,x_{\nu};L]$ for all $\nu=0,1,\ldots,n.$ We will show that for all $i\in \Z$ and all $\nu=0,1,\ldots,n$:
\begin{enumerate}
\item $H_i(k\dtensor {R^{(1/d)}}L^{(\nu)})$ is a finite $S$-module,
\item the $S$-module $H_i(k\dtensor {R^{(1/d)}}L^{(\nu)})$ is annihilated by $\nn^r$.
\end{enumerate}
Indeed, (i) is clear since $M\in \dgrcatp S$. Using the hypothesis and Lemma \ref{lem_ann_dtensor}, we get
\[
\nn^r\subseteq \mm S+\bsy S+\Ann_{\dcat{S}}M \subseteq \mm S+\Ann_{\dcat{S}}L.
\]
Now (ii) follows from the inclusions
\[
\nn^r \subseteq \mm S+\Ann_{\dcat{S}}L \subseteq \mm S+\Ann_{\dcat{S}}L^{(\nu)}\subseteq \Ann_{\dcat S}(k\dtensor {R^{(1/d)}}L^{(\nu)})
\]
for each $\nu$. 

For each $\nu$, $L^{(\nu+1)}$ is the mapping cone of the morphism $\lambda^{(\nu)}: L^{(\nu)}(-dg)\to L^{(\nu)}$, where the latter is the multiplication by $x_{\nu+1}$.  Moreover, we have a triangle in $\dgrcat S$
\[
k\dtensor {R^{(1/d)}} L^{(\nu)}(-dg) \xrightarrow{k\dtensor {R^{(1/d)}}\lambda^{(\nu)}} k\dtensor {R^{(1/d)}} L^{(\nu)} \to k\dtensor {R^{(1/d)}} L^{(\nu+1)} \to.
\]
For each $i,j\in \Z$, denote $\beta_{i,j}(L^{(\nu)})=\ell\left(H_i(k\dtensor 
{R^{(1/d)}}L^{(\nu)})_j\right)$, which is a finite number thanks to (i). Let
\[
r(\nu)=\sup\left\{\frac{j-idg}{dg}: \beta_{i,j}(L^{(\nu)}) \neq 0 \right\}.
\]
Because of (ii), we can apply Lemma \ref{triangle} to get the following inequalities
\begin{align}
&\beta_{i,j}(L^{(\nu+1)})\le \beta_{i,j}(L^{(\nu)}) + \beta_{i-1,j-dg}(L^{(\nu)}) \label{ineq_Lnu}.\\
&\beta_{i,j}(L^{(\nu)}) \le \sum_{m=0}^{r-1}\beta_{i+1,j+(m+1)dg}(L^{(\nu+1)}) \label{ineq_Lnu+1}.
\end{align}
Now we will show that for every $\nu$,
\[
r(\nu) = r(\nu+1).
\]
First, take any $i,j$ such that $(j-idg)/dg > r(\nu)$. Then obviously
\[
\frac{(j-dg)-(i-1)dg}{dg} = \frac{j-idg}{dg}> r(\nu).
\]
Therefore $\beta_{i,j}(L^{(\nu)})=\beta_{i-1,j-dg}(L^{(\nu)})=0$. From \eqref{ineq_Lnu}, we also get $\beta_{i,j}(L^{(\nu+1)})=0$. Hence $r(\nu+1) \le r(\nu)$.

Second, take any $i,j$ such that $(j-idg)/dg > r(\nu+1)$. Then for all $m\ge 0$,
\[
\frac{j+(m+1)dg-(i+1)dg}{dg} = \frac{j-idg}{dg}+m\ge \frac{j-idg}{dg} >r(\nu+1).
\]
So $\beta_{i+1,j+(m+1)dg}(L^{(\nu+1)})=0$ for $m=0,\ldots,r-1$. Using \eqref{ineq_Lnu+1}, we infer that $\beta_{i,j}(L^{(\nu)})=0$. Hence $r(\nu)\le r(\nu+1)$. So $r(\nu)=r(\nu+1)$, as claimed.

All in all, we obtain $r(0)=r(1)=\cdots= r(n)$. Finally, note that $r(n)=\reg_{\vphi}L$ and
\[
r(0)=\sup\left\{\frac{j-idg}{dg}: H_i(k\dtensor {R^{(1/d)}}L)_j \neq 0 \right\},
\]
so \eqref{key_ineq} is proved. The proof of the theorem is now completed.
\end{proof}

\section{Factorizations, comparison theorem and composition of homomorphisms}
\label{factorizations_compositions}
\subsection*{Factorizations}
Let $\vphi: (R,\mm,k)\to (S,\nn,l)$ be again a homomorphism of order $d$, $R$ is generated over $k$ in degree $g$ and $S$ is generated over $l$ in degree $h$, where $d,g,h\ge 1$. Instead of working with $\vphi$, we can work with a morphism from a certain polynomial extension of $R$ to $S$.

In detail, let $c\in \N$ be such that $dg=ch$. Let $y_1,\ldots,y_m$ be a 
generating set of $\nn$ modulo $\mm S$ for which $\deg y_i=h$ for all $i$. Let 
$R[t_1,\ldots,t_m]$ be a polynomial extension of $R$, where the $t_i$ are 
variables of degree $g$. Consider the morphism $\vphi':R[t_1,\ldots,t_m] \to S$ 
mapping $t_i$ to $y_i^c$ for each $i$. We have a commutative diagram
\begin{displaymath}
\xymatrix{                     & R[t_1,\ldots,t_m] \ar^{\textstyle \vphi'}[rd]   &\\
           R \ar[ru] \ar^{\textstyle \vphi}[rr]           &                                   &S}
\end{displaymath}
Clearly $\vphi'$ has order $d$ and $S/(\mm S+(t_1,\ldots,t_m)S)$ is artinian. We will call such a pair $(R[t_1,\ldots,t_m],\vphi')$ an {\em artinian factorization} of $\vphi$. Regularity of a complex computed over $\vphi$ or over a factorization are the same because of the following
\begin{thm}[Factorization]
\label{factor_and_reg}
Let $R[t_1,\ldots,t_m]$ be an arbitrary polynomial extension of $R$ (where $m\ge 1$, the variables $t_i$ have degree $g$). Let $\vphi': R[t_1,\ldots,t_m]\to S$ be a homomorphism of order $d$ such that $\vphi$ factors through $\vphi'$. Then for any $M\in \dgrcatp S$, there is an equality
\[
\reg_{\vphi} M =\reg_{\vphi'}M.
\]
\end{thm}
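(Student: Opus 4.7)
The plan is to compute both $\reg_{\vphi} M$ and $\reg_{\vphi'} M$ using a single Koszul complex $K[\bsx;M]$ of $S$-complexes, and then relate the two derived tensor products by a change-of-rings argument along the polynomial inclusion $R^{(1/d)}\hookrightarrow R'^{(1/d)}$, where $R':=R[t_1,\ldots,t_m]$.

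I begin by choosing $\bsx=x_1,\ldots,x_n$, a minimal degree-$dg$ generating set of $\nn^c$ modulo $\mm S$; by definition this computes $\reg_{\vphi}M$. Since $\mm S\subseteq \mm' S$ (where $\mm'$ denotes the graded maximal ideal of $R'$), the same $\bsx$ is a possibly non-minimal degree-$dg$ generating set of $\nn^c$ modulo $\mm' S$, so by Proposition \ref{non_minimal_gen} it also computes $\reg_{\vphi'}M$. The question thus reduces to comparing
\[
k\dtensor{R^{(1/d)}} K[\bsx;M] \quad\text{and}\quad k\dtensor{R'^{(1/d)}} K[\bsx;M].
\]

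A direct degree-by-degree check using $R'^{(1/d)}_n=R'_{n/d}$ (when $d\mid n$) shows that $R'^{(1/d)}=R^{(1/d)}[t_1,\ldots,t_m]$ as graded rings, each $t_i$ now of degree $dg$. Hence $\bst=t_1,\ldots,t_m$ is a regular sequence in $R'^{(1/d)}$ whose quotient is $R^{(1/d)}$, so $K[\bst;R'^{(1/d)}]$ is a homogeneous free resolution of $R^{(1/d)}$ over $R'^{(1/d)}$. The standard change-of-rings identification then gives, for any $X\in\dgrcatp{R'^{(1/d)}}$,
\[
k\dtensor{R'^{(1/d)}} X \;\simeq\; k\dtensor{R^{(1/d)}}\bigl(R^{(1/d)}\dtensor{R'^{(1/d)}} X\bigr) \;\simeq\; k\dtensor{R^{(1/d)}} K[\bst;X].
\]
When $X$ is viewed through $\vphi'$, the action of $t_i$ on $X$ is multiplication by $y_i:=\vphi'(t_i)\in S_{dg}$, so $K[\bst;X]=K[y_1,\ldots,y_m;X]$ as $S$-complexes.

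Setting $X=K[\bsx;M]$ and using $K[\bst;K[\bsx;M]]=K[\bsx,y_1,\ldots,y_m;M]$, I obtain an isomorphism identifying the left-hand side above with $k\dtensor{R^{(1/d)}} K[\bsx,y_1,\ldots,y_m;M]$. But $(\bsx,y_1,\ldots,y_m)$ is still a degree-$dg$ generating set of $\nn^c$ modulo $\mm S$ (since $\bsx$ alone already is), so a second application of Proposition \ref{non_minimal_gen}, this time to $\vphi$, shows that the sup computed from the latter homology equals $\reg_{\vphi}M$. Comparing the two formulas yields $\reg_{\vphi'}M=\reg_{\vphi}M$. The main technical nuisance I anticipate is the bookkeeping around the $(1/d)$-grading: one must keep the identifications $R'^{(1/d)}=R^{(1/d)}[t_1,\ldots,t_m]$ and $K[\bst;X]=K[y_1,\ldots,y_m;X]$ consistent with the homogeneous morphisms into $S$, so that the change-of-rings isomorphism is genuinely one of homogeneous derived objects in $\dgrcat{R^{(1/d)}}$ rather than merely of ungraded complexes.
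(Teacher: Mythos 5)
Your proposal is correct and follows essentially the same route as the paper's own argument: both proofs hinge on the identification $R'^{(1/d)}\cong R^{(1/d)}[t_1,\ldots,t_m]$ with the $t_i$ of degree $dg$, use the Koszul complex $K[\bst;-]$ as a free resolution of $R^{(1/d)}$ over $R'^{(1/d)}$ to effect the change of rings via associativity of the derived tensor product, identify $K[\bst;X]$ with $K[\vphi'(\bst);X]$ as $S$-complexes, and then finish by invoking Proposition \ref{non_minimal_gen} (the paper routes the last step through Lemma \ref{Koszul_invariance}, which is just a repackaging of that proposition). The only cosmetic difference is that the paper first reduces to the homogeneous case $d=1$ and works with a minimal resolution $F$ of $k$ over $R$, whereas you carry the $(1/d)$-grading through explicitly; the algebraic content is identical.
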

\begin{proof}
Let $\widetilde{\vphi}:R^{(1/d)}\to S$ be the induced homogeneous morphism. From the definition, clearly $\reg_{\vphi}M=\reg_{\widetilde{\vphi}}M$. Moreover, $(R[t_1,\ldots,t_m])^{(1/d)}\cong R^{(1/d)}[t'_1,\ldots,t'_m]$, a polynomial extension of $R^{(1/d)}$, where the new variables $t'_i$ have degree $dg$. Therefore we can assume from the beginning that $\vphi$ is a homogeneous morphism, namely $d=1$. So $g=hc$.

Let $F$ be the minimal graded free resolution of $k$ over $R$. Denote $Q=R[t_1,\ldots,t_m]$ and $\bst=t_1,\ldots,t_m$. Clearly $K[\bst;Q]\simeq R$ over $Q$. Since $R\to Q$ is flat, we have the following isomorphisms in $\dgrcat Q$
\[
F\dtensor R K[\bst;Q] \simeq k\dtensor R K[\bst;Q] \simeq k\dtensor R R \simeq k.
\]
Let $\mm_Q$ be the graded maximal ideal of $Q$. Since $\vphi$ factors through $\vphi'$, we have $\mm S \subseteq \mm_Q S$. Let $\bsx$ be a finite sequence of elements of degree $g$ in $S$ which minimally generates $\nn^c$ modulo $\mm S$. Then we have isomorphisms in $\dgrcat Q$:
\begin{align*}
k\dtensor {Q} K[\bsx;M] &\simeq (F\dtensor R K[\bst;Q]) \dtensor Q K[\bsx;M]\\
                        &\simeq F\dtensor R (K[\bst;Q] \dtensor Q K[\bsx;M])\\
                        &\simeq F\dtensor R K[\vphi'(\bst),\bsx;M]\\
                        &\simeq k\dtensor R K[\bsx; K[\vphi'(\bst);M]].
\end{align*}
Note that $\bsx$ also generates $\nn^c$ modulo $\mm_QS$, therefore, Proposition 
\ref{non_minimal_gen} and Lemma \ref{Koszul_invariance} imply the two 
equalities at the two ends in the following strand
$$\reg_{\vphi'}M=\reg_{\vphi',\bsx}M = \reg_{\vphi}K[\vphi'(\bst);M]=\reg_{\vphi}M.$$
The middle equation follows from the isomorphisms above. The proof of the theorem is now complete.
\end{proof}

\subsection*{Comparison Theorem}
The first part of the next result shows the stability of regularity over a 
homomorphism with respect to taking fractional Veronese. The second part gives a 
connection between the fractional Veronese and the Veronese grading, namely 
provides a comparison between the regularity of $M$ over $R^{(1/d)}\to S$ 
and that over the induced morphism $R\to S^{(d)}$.
\begin{thm}
\label{reg_fract_Veronese}
The following statements hold true for any complex $M\in \dgrcatp S$:
\begin{enumerate}[\quad \rm(i)]
\item For any $s\ge 1$, denote by $\vphi^{(1/s)}$ the induced homomorphism $R^{(1/s)}\to S^{(1/s)}$. Then we have
\[
\reg_{\vphi}M=\reg_{\vphi^{(1/s)}}M^{(1/s)}.
\]
\item \textnormal{(Comparison)} Denote by $\widehat{\vphi}$ the induced 
homogeneous morphism $R\to S^{(d)}$. Then
\[
\reg_{\widehat{\vphi}} M \le \reg_{\vphi}M \le 
\reg_{\widehat{\vphi}} M+\frac{d-1}{dg}.
\]
\end{enumerate}
\end{thm}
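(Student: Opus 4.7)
The plan is to reduce both statements to explicit identities between the relevant Koszul complexes used in the definitions of $\reg_{\vphi}$, $\reg_{\vphi^{(1/t)}}$ and $\reg_{\widehat{\vphi}}$.

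For (i), the key observation is that the fractional Veronese $(-)^{(1/t)}$ is nothing but a uniform rescaling of degrees by $t$. A sequence $\bsx=x_1,\ldots,x_n$ of elements of degree $dg$ in $S$ generating $\nn^c$ modulo $\mm S$ can be viewed unchanged in $S^{(1/t)}$, where its elements now have degree $tdg$ and generate the corresponding ideal. Since $(-)^{(1/t)}$ is exact and the fractional Veronese of $R^{(1/d)}$ is $R^{(1/(dt))}$, one has $K[\bsx;M^{(1/t)}]=K[\bsx;M]^{(1/t)}$, whence
\[
\beta^{\vphi^{(1/t)}}_{i,j}(M^{(1/t)})=\begin{cases}\beta^{\vphi}_{i,j/t}(M), & t\mid j,\\ 0, & \text{otherwise}.\end{cases}
\]
Substituting $j=tj'$ and replacing the denominator $dg$ by $tdg$ preserves the ratio $(j'-idg)/(dg)$, so $\reg_{\vphi^{(1/t)}}M^{(1/t)}=\reg_{\vphi}M$.

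For (ii), the first step is to verify the Koszul identity $\up{\vphi}K[\bsx;M]=K[\bsx;\up{\vphi}M]$, where the right-hand side is the Koszul complex over $S^{(d)}$ in which each $x_i$ is viewed as an element of degree $g$ (rather than $dg$ in $S$). The identity holds because multiplication by $x_i$ carries each summand $V_{i'}(d,M)$ into itself and shifts its Veronese degree by $g$. The same sequence $\bsx$ serves to compute both regularities, since $\bsx$ also generates the ideal $\nn_{S^{(d)}}^{c'}$ modulo $\mm S^{(d)}$ in the appropriate degree. Second, the minimal graded free resolution $F$ of $k$ over $R^{(1/d)}$ is simply $(F')^{(1/d)}$, where $F'$ is the minimal graded free resolution of $k$ over $R$. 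Combined with $\up{\vphi}(N(-dn))=\up{\vphi}N(-n)$ applied summand-by-summand, this yields
\[
\up{\vphi}\bigl(k\dtensor{R^{(1/d)}}K[\bsx;M]\bigr)\simeq k\dtensor{R}K[\bsx;\up{\vphi}M].
\]
Passing to homology and computing dimensions of Veronese-graded pieces via $\dim_l(\up{\vphi}X)_u=\sum_{i'=0}^{d-1}\dim_l X_{du+i'}$, one obtains the Betti-number formula
\[
\beta^{\widehat{\vphi}}_{i,u}(\up{\vphi}M)=\sum_{i'=0}^{d-1}\beta^{\vphi}_{i,du+i'}(M).
\]
Writing $j=du+i'$ with $0\le i'\le d-1$, the identity $(j-idg)/(dg)=(u-ig)/g+i'/(dg)$ together with the bounds $0\le i'\le d-1$ gives, upon taking suprema over pairs with nonzero Betti number, both halves of the claimed chain of inequalities.

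The main technical hurdle is keeping the grading conventions straight—particularly in the non-standard graded case, where $S^{(d)}$ need not be generated in degree $g$—and verifying that $\up{\vphi}$ genuinely commutes with change of rings along $\widetilde{\vphi}\colon R^{(1/d)}\to S$ up to the expected shift in grading. Once these compatibilities are in place, the proof reduces to the elementary numerical manipulations above.
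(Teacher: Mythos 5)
Your proof is correct. For part~(i) the argument matches the paper's: both identify $K[\bsx';M^{(1/t)}]$ with $K[\bsx;M]^{(1/t)}$ and exploit the exactness of $(-)^{(1/t)}$ to rescale all internal degrees by $t$, leaving the normalized ratio $(j-idg)/(dg)$ unchanged.

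For part~(ii) you take a genuinely different route. The paper first passes to an artinian factorization of $\vphi$ (Theorem~\ref{factor_and_reg}) so that $S/\mm S$ is artinian; then Remark~\ref{artinian_case} eliminates the auxiliary Koszul complex, and the comparison is made directly between $k\dtensor{R^{(1/d)}}M$ and $k\dtensor{R}\up{\vphi}M$ via the functor $\Phi$ of Theorem~\ref{functor_varPhi}. You instead keep the Koszul complexes and use the strict identity $\up{\vphi}K[\bsx;M]=K[\bsx;\up{\vphi}M]$ (which holds because the Koszul twists are by multiples of $dg$, so $\up{\vphi}(N(-idg))=(\up{\vphi}N)(-ig)$ term by term), together with the fact that the minimal $R^{(1/d)}$-free resolution of $k$ is $(F')^{(1/d)}$, to obtain $\up{\vphi}(k\dtensor{R^{(1/d)}}K[\bsx;M])\simeq k\dtensor{R}K[\bsx;\up{\vphi}M]$ and hence the Betti-number relation $\beta^{\widehat{\vphi}}_{i,u}(\up{\vphi}M)=\sum_{i'=0}^{d-1}\beta^{\vphi}_{i,du+i'}(M)$ directly, with no artinian reduction. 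Both routes ultimately rest on the same structural input (the functor $\Phi$ taking the minimal resolution of $k$ over $R^{(1/d)}$ to that over $R$) and finish with the identical numerical estimate using $(j-idg)/(dg)=(u-ig)/g+i'/(dg)$ for $j=du+i'$. What your route buys is the avoidance of the artinian-factorization machinery; what it costs is that you must check, as you only assert, that the same $\bsx$ generates $\nn^c$ modulo $\mm S$ in $S$ and, after regrading, the appropriate power of $\nn_{S^{(d)}}$ modulo $\mm S^{(d)}$ — a routine but necessary degree-bookkeeping verification, which you rightly flag as the technical hurdle. With that check supplied, the argument is sound.
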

\begin{proof}
Let $\bsx=x_1,\ldots,x_n$ be minimal homogeneous generators of $\nn^c$ modulo 
$\mm S$.

For (i): let $\bsx'$ be the sequence $x_1,\ldots,x_n$ regarded as elements of $S^{(1/s)}$. It is not hard to see that in $\dgrcat {S^{(1/s)}}$,
\[
K[\bsx';M^{(1/s)}]=K[\bsx;M]^{(1/s)}.
\]
Denote $t^{\vphi}_i(M)=\sup\{j:\beta^{\vphi}_{i,j}(M)\neq 0\}$ for each $i\in \Z$. Using the exactness of the functor $(\cdot)^{(1/s)}$, we have
\begin{align*}
k\dtensor {R^{(1/ds)}} K[\bsx';M^{(1/s)}] &\simeq k^{(1/s)}\dtensor {R^{(1/ds)}} K[\bsx;M]^{(1/s)}\\
                                          &\simeq (k\dtensor {R^{(1/d)}} K[\bsx;M])^{(1/s)}.
\end{align*}
Therefore $t^{\vphi^{(1/s)}}_i(M^{(1/s)})=s\cdot t^{\vphi}_i(M)$ for all $i$. The desired equality follows immediately.

For (ii): choose an artinian factorization $(R[t_1,\ldots,t_m],\vphi')$ of 
$\vphi$. By Theorem \ref{factor_and_reg}, we get $\reg_{\vphi}M=\reg_{\vphi'}M$ 
and $\reg_{\widehat{\vphi}} M=\reg_{\widehat{\vphi'}} M$. Therefore we 
can replace $R$ by $R[t_1,\ldots,t_m]$ and assume that $S/\mm S$ is artinian.

Applying Remark \ref{artinian_case}, we see that 
\[
\reg_{\vphi}M=\sup\left\{\frac{j-idg}{dg}:H_i(k\dtensor{R^{(1/d)}}M)_j\neq 0\right\},
\] 
and, as $\widehat{\vphi}:R\to S^{(d)}$ is a map of degree $0$, that 
\[
\reg_{\widehat{\vphi}}M=\sup\left\{\frac{j-ig}{g}:H_i(k\dtensor{R} M)_j\neq 
0\right\}.
\]
Denote 
\begin{align*}
& t_i=\sup\{j: H_i(k\dtensor{R^{(1/d)}}M)_j\neq 0\},\\
& s_i=\sup\{j: H_i(k\dtensor{R} M)_j\neq 0\},
\end{align*}
 for each $i$. Then $\reg_{\vphi}M=\sup_{i\in \Z}\{(t_i-idg)/dg\}$. Let $G$ be the minimal graded free resolution of $k$ over $R^{(1/d)}$. Then by Lemma \ref{functor_varPhi}, $\Phi(G)$ is the minimal graded free resolution of $k$ over $R$. Hence 
$$
\Phi(k\dtensor{R^{(1/d)}}M)=\Phi(G\otimes_{R^{(1/d)}}M) \simeq \Phi(G)\otimes_R 
M \simeq k\dtensor{R} M
$$ 
as $S^{(d)}$-complexes. Therefore $H_i(k\dtensor{R} M)_j \neq 0$ for some $j$ if 
and only if $H_i(k\dtensor{R^{(1/d)}}M)_{dj+r}\neq 0$ for some $0\le r\le d-1$. 
In particular, we get
\[
s_i\le \frac{t_i}{d} \le s_i+\frac{d-1}{d},
\]
for each $i$. Combining with
\[
\reg_{\widehat{\vphi}} M = \sup_{i\in \Z}\left\{\frac{s_i-ig}{g}\right\},
\]
the conclusion follows.
\end{proof}

\subsection*{Composition of homomorphisms}
First we consider the behavior of regularity over a homomorphism along 
polynomial extensions. Let $S[t]$ be a polynomial extension of $S$ where $\deg 
t=h$. Let $R[t']$ be a polynomial extension of $R$ where $\deg t'=g$. Let 
$\vphi[t]$ be the morphism $R[t']\to S[t]$ which restricts to $\vphi$ on $R$ 
and $\vphi[t](t')=t^c$. Clearly $\vphi[t]$ also has order $d$. Denote 
$M[t]=M\otimes_S S[t]$ for each $M\in \dgrcatp S$. We have
\begin{prop}
\label{polynomial_ext}
For any $M\in \dgrcatp S$, there is an equality
\[
\reg_{\vphi[t]}M[t]=\reg_{\vphi}M + (1-\frac{h}{dg}).
\]
\end{prop}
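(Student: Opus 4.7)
The plan is to reduce to the artinian case via Theorem \ref{factor_and_reg}, and then carry out an explicit change-of-rings computation that identifies the derived tensor product over $R[t']^{(1/d)}$ with a shifted sum over $R^{(1/d)}$. First I would replace $\vphi$ by an artinian factorization $\vphi' \colon R[\bsu] \to S$, with the $u_i$ variables of degree $g$; then $\vphi'[t] \colon R[\bsu][t'] \to S[t]$ factors $\vphi[t]$, and
\[
S[t]/\mm_{R[\bsu][t']} S[t] \cong \bigl(S/(\mm S+\vphi'(\bsu)S)\bigr)[t]/(t^c)
\]
is artinian. By Theorem \ref{factor_and_reg} both $\reg_\vphi M$ and $\reg_{\vphi[t]} M[t]$ are unchanged, so I may assume $S/\mm S$ is already artinian.

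In that case Remark \ref{artinian_case} rewrites the regularities as suprema involving $H_i(k \dtensor{R^{(1/d)}} M)$ and $H_i(k \dtensor{R[t']^{(1/d)}} M[t])$ respectively. The crucial identification is $R[t']^{(1/d)} \cong R^{(1/d)}[t'']$ with $\deg t'' = dg$, compatibly with the induced homogeneous morphism $\widetilde{\vphi[t]} \colon R^{(1/d)}[t''] \to S[t]$ sending $t''$ to $t^c$; this is homogeneous because $\deg t^c = ch = dg$.

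Since $t^c$ is a nonzerodivisor on $M[t] = M \otimes_k k[t]$, the change-of-rings formula and the Koszul resolution of $R^{(1/d)}$ over $R^{(1/d)}[t'']$ by $t''$ give
\[
k \dtensor{R^{(1/d)}[t'']} M[t] \simeq k \dtensor{R^{(1/d)}} \bigl(R^{(1/d)} \dtensor{R^{(1/d)}[t'']} M[t]\bigr) \simeq k \dtensor{R^{(1/d)}} \bigl(M[t]/t^c M[t]\bigr).
\]
As a graded $R^{(1/d)}$-module (acting through $\widetilde{\vphi}$), $M[t]/t^c M[t] \cong \bigoplus_{u=0}^{c-1} M(-uh)$, with the shifts recording the degrees of the $t^u$.

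Combining, one obtains
\[
H_i\bigl(k \dtensor{R[t']^{(1/d)}} M[t]\bigr)_j \;=\; \bigoplus_{u=0}^{c-1} H_i\bigl(k \dtensor{R^{(1/d)}} M\bigr)_{j-uh},
\]
and the supremum defining $\reg_{\vphi[t]} M[t]$ is attained at $u = c-1$, contributing an extra $(c-1)h/(dg) = 1 - h/(dg)$ beyond $\reg_\vphi M$ (using $ch = dg$), giving the desired formula. The main technical point is the careful bookkeeping of the gradings through the identification $R[t']^{(1/d)} \cong R^{(1/d)}[t'']$ and the Koszul reduction; once both regularities sit inside the same derived tensor product framework, the conclusion is immediate.
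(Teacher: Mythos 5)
Your proof is correct and follows essentially the same path as the paper: reduce to the case $S/\mm S$ artinian via Theorem \ref{factor_and_reg}, identify $R[t']^{(1/d)}\cong R^{(1/d)}[t'']$, and then read off the degree shift $(c-1)h/(dg)=1-h/(dg)$ from the $S[z]$-free decomposition of $S[t]$. The one mild difference is the intermediate step: the paper decomposes $M[t]\simeq\bigoplus_{j=0}^{c-1}M[z](-hj)$ over $S[z]$ and then compares $k\dtensor{A[z]}M[z]$ with $k\dtensor{A}M$ via a flat base-change of minimal resolutions, whereas you first run the Koszul resolution of $R^{(1/d)}$ over $R^{(1/d)}[t'']$ on $t''$ (using that $t^c$ is a nonzerodivisor on $M[t]$) to get $k\dtensor{R^{(1/d)}[t'']}M[t]\simeq k\dtensor{R^{(1/d)}}\bigl(M[t]/t^cM[t]\bigr)$ and then decompose $M[t]/t^cM[t]\cong\bigoplus_{u=0}^{c-1}M(-uh)$ directly over $R^{(1/d)}$. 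These are two phrasings of the same change-of-rings computation; your version avoids the separate base-change argument for minimal resolutions and is, if anything, slightly cleaner. The degree bookkeeping at the end is correct.
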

\begin{proof}
First, consider the case $S/\mm S$ is artinian. Then $S[t]/(\mm +(t'))S[t]$ is also artinian. We know from Remark \ref{artinian_case} that 
\begin{align*}
\reg_{\vphi}M&=\sup\left\{\frac{j-idg}{dg}:H_i(k\dtensor{R^{(1/d)}}M)_j\neq 0\right\},\\
\reg_{\vphi[t]}M[t]&=\sup\left\{\frac{j-idg}{dg}:H_i(k\dtensor{R^{(1/d)}[z]}M[t])_j\neq 0\right\},
\end{align*}
where $R^{(1/d)}[z] \cong (R[t'])^{(1/d)}$ is a polynomial extension of $R^{(1/d)}$ with $\deg z=dg$. Denote $A=R^{(1/d)}$. 

Since $z$ maps to $t^c\in S[t]$, we have isomorphisms of graded free $S[z]$-modules $S[t]\cong  \bigoplus_{j=0}^{c-1}S[z]t^j \cong \bigoplus_{j=0}^{c-1}S[z](-hj)$. Therefore $M[t]\simeq \bigoplus_{j=0}^{c-1}(M[z])(-hj)$ as $S[z]$-complexes. We obtain that $k\dtensor{A[z]}M[t] \cong \bigoplus_{j=0}^{c-1}(k\dtensor{A[z]}M[z])(-hj).$
Let $F$ be the minimal graded $S$-free resolution of $k\dtensor{A}M$. Then because of the flatness of $S\to S[z]$, $F\otimes_S S[z]$ is the minimal graded $S[z]$-free resolution of $k\dtensor{A[z]}M[z]$. In particular
\[
\sup\{j:H_i(k\dtensor{A}M)_j\neq 0\}=\sup\{j:H_i(k\dtensor{A[z]}M[z])_j\neq 0\}.
\] 
Hence $\reg_{\vphi[t]}M[t]=\reg_{\vphi} M + (h(c-1)/dg)$, as desired.

Now consider the general case. Let $(R[t'_1,\ldots,t'_m],\vphi')$ be an artinian factorization of $\vphi$. Then from Theorem \ref{factor_and_reg}, $\reg_{\vphi}M=\reg_{\vphi'}M$. Denote by $(\vphi[t])'$ the induced homomorphism $R[t'_1,\ldots,t'_m,t']\to S[t]$. The two morphisms $\vphi'[t]$ and $(\vphi[t])'$ are equal, so we have $\reg_{\vphi'[t]}M[t]=\reg_{(\vphi[t])'}M[t]$. Now applying the previous case for the morphism $\vphi'$, we obtain the desired conclusion.
\end{proof}

The next result is the key to constructing certain complexes in the proof 
of Theorem \ref{orderge2}.
\begin{thm}[Composition of homomorphisms]
\label{composition}
Let $(R',\mm',k')\xrightarrow{\pi'} (R,\mm,k) \xrightarrow{\pi}(S,\nn,l)$ be 
homomorphisms of graded algebras such that $\order \pi'=d'$ and $\order \pi= d$. 
Assume that $R',R,S$ are generated as an algebra in degrees $g',g,h\ge 1$, 
respectively. 
Let $L\in \dgrcatp R$ and $N\in \dgrcatp S$ be such that $L, N\not\simeq 0$, $\reg_{\pi'}L<\infty$ and $\reg_{\pi}N <\infty$. Denote by $P$ the complex of graded $S$-modules $L^{(1/d)}\dtensor {R^{(1/d)}}N$. Then we also have
\[
\reg_{\pi \circ \pi'} P <\infty.
\]
\end{thm}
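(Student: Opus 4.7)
My plan has two stages: first reduce to the artinian case where both $R/\mm'R$ and $S/\mm S$ are artinian, then apply a single hyperhomology spectral sequence to bound $\reg_{\pi\pi'}P$.

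\textbf{Reduction to the artinian case.} Using Theorem \ref{factor_and_reg}, I extend $R'$ by polynomial variables $\bsu$ of degree $g'$ to obtain $\pi'_0\colon R'[\bsu]\to R$ artinian (sending $\bsu$ to a degree-$d'g'$ generating sequence of $\mm^{c'}$ modulo $\mm'R$, where $c'=d'g'/g$); this preserves $\reg_{\pi'}L$. Analogously, extend $R$ by polynomial variables $\bsv$ of degree $g$ to an artinian $\pi_0\colon R[\bsv]\to S$; an application of Proposition \ref{non_minimal_gen} combined with Remark \ref{artinian_case} yields $\reg_{\pi_0}N=\reg_\pi N$. Next, take the induced map $R'[\bsu]\to R\hookrightarrow R[\bsv]$ and further extend its source by polynomial variables to an artinian morphism $\pi''_0\colon R''_0\to R[\bsv]$; iterating Proposition \ref{polynomial_ext} to introduce target variables one at a time (then absorbing the extra source variables by Theorem \ref{factor_and_reg}) shows $\reg_{\pi''_0}L_0<\infty$ for $L_0:=L\otimes_R R[\bsv]$. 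Flatness of $R\to R[\bsv]$ together with the commutation of the fractional Veronese functor with base change gives $L_0^{(1/d)}\dtensor{R[\bsv]^{(1/d)}}N\simeq P$, while Theorem \ref{factor_and_reg} identifies $\reg_{\pi\pi'}P=\reg_{\pi_0\circ\pi''_0}P$. So I may assume both $\pi$ and $\pi'$ are artinian.

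\textbf{Main identification.} Set $A:=R'^{(1/dd')}$ and $B:=R^{(1/d)}$. By Remark \ref{artinian_case},
\[
\reg_{\pi\pi'}P=\sup\!\left\{\frac{j-idd'g'}{dd'g'}:H_i(k'\dtensor{A}P)_j\neq 0\right\}.
\]
Associativity of the derived tensor product gives $k'\dtensor{A}P\simeq C\dtensor{B}N$ with $C:=k'\dtensor{A}L^{(1/d)}$. The exactness of the fractional Veronese, together with $(k')^{(1/d)}=k'$ and the identification $A=(R'^{(1/d')})^{(1/d)}$, yields $C\simeq(k'\dtensor{R'^{(1/d')}}L)^{(1/d)}$. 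Combining this with Remark \ref{artinian_case} and $r:=\reg_{\pi'}L$ forces $H_q(C)_j=0$ whenever $j>dd'g'(q+r)$. Since $H_q(C)$ is annihilated by $\mm'$, it is a finitely generated $R/\mm'R$-module, hence of finite length over $B$, and so admits a graded composition series over $B$ with factors $k(-j_\alpha)$, each $j_\alpha\le dd'g'(q+r)$.

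\textbf{Spectral sequence bound.} The Cartan--Eilenberg spectral sequence
\[
E^2_{p,q}=\Tor^B_p(H_q(C),N)\Longrightarrow H_{p+q}(C\dtensor{B}N)
\]
converges because $C$ is bounded below. Applying $\Tor^B(-,N)$ long exact sequences along the composition series of $H_q(C)$, any nonzero $E^2_{p,q}$ in degree $j$ forces $\Tor^B_p(k,N)_{j-j_\alpha}\neq 0$ for some $\alpha$; with $s:=\reg_\pi N$ and Remark \ref{artinian_case}, this gives $j-j_\alpha\le dg(p+s)$. Combining with $j_\alpha\le dd'g'(q+r)$, $p+q=i$, and the identity $dd'g'=c'dg$ yields $j\le i\cdot dd'g'+(dd'g' r+dg\, s)$, hence $\reg_{\pi\pi'}P\le r+s/c'<\infty$.

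The main obstacle is the coordinated chain of polynomial extensions in the reduction step: one must simultaneously arrange artinianness of both morphisms, preserve the complex $P$ up to quasi-isomorphism, and keep every regularity finite, which requires alternating Theorem \ref{factor_and_reg} (source-only extensions) with Proposition \ref{polynomial_ext} (paired source-and-target extensions) in the right order. Once in the artinian setting, the spectral sequence argument is then routine bookkeeping.
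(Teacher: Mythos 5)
Your proof is essentially the paper's: you reduce to the artinian setting via Theorem \ref{factor_and_reg} and Proposition \ref{polynomial_ext}, then bound the regularity of $P$ by feeding the associativity spectral sequence
\[
E^2_{p,q}=\Tor^B_p\bigl(H_q(C),N\bigr)\;\Longrightarrow\; H_{p+q}(k'\dtensor{A}P)
\]
with a filtration of $H_q(C)$ by copies of shifted residue fields. (The paper expresses the same argument as an inequality of lengths after filtering $\Tor^{R'}_q(k',L)$; it also first reduces to $d=d'=1$ via Theorem \ref{reg_fract_Veronese}, which you avoid by carrying $dd'g'$ explicitly --- both are fine.)

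One small bookkeeping slip in the last step: passing from $j\le dd'g'(q+r)+dg(p+s)$ with $p+q=i$ to $j\le i\cdot dd'g'+(dd'g'r+dgs)$ uses $p(dg-dd'g')\le 0$, hence implicitly $p\ge 0$. But in $\Tor^B_p(H_q(C),N)$ the index $p$ only satisfies $p\ge\inf N$, and $N$ is a genuine complex in the application (Theorem \ref{orderge2}), so $\inf N$ can be negative. You then get the weaker but still finite bound $j-i\cdot dd'g'\le dd'g'r+dgs+(dg-dd'g')\inf N$; this is exactly the $(g-g')\inf N$-term appearing in the paper's estimate. The conclusion $\reg_{\pi\circ\pi'}P<\infty$ is unaffected, only your explicit constant $r+s/c'$ needs the correction. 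Also, when you invoke Proposition \ref{non_minimal_gen} plus Remark \ref{artinian_case} to identify $\reg_{\pi_0}N$ with $\reg_\pi N$, the more direct citation is Theorem \ref{factor_and_reg}, which is exactly this invariance of regularity under artinian factorization.
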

The following diagram illustrates the statement of Theorem \ref{composition}.
\begin{displaymath}
\xymatrix{                 &   &       &                                           & P=L^{(1/d)}\dtensor {R^{(1/d)}}N \ar@{.}[d]\\
R' \ar[rrr]^{\textstyle\pi'} \ar@/^1.9pc/[rrrr]^{\textstyle \pi\circ \pi'}&  & & R \ar[r]^{\textstyle\pi} \ar@{.}[d]  & S \ar@{.}[d] \\
& & & L & N}
\end{displaymath}
The dotted line between $R$ and $L$ simply means that $L$ is a complex of 
$R$-modules, and so on.
\begin{proof}
Replacing $\pi',\pi$ by suitable artinian factorizations and polynomial extensions, we can assume that $R/\mm'R$ and $S/\mm S$ are artinian rings.

Indeed, consider the diagram below where
\begin{enumerate}
\item $(R[t_1,\ldots,t_m],\tau)$ is an artinian factorization of $\pi$,
\item $R'[z_1,\ldots,z_m]$ is a polynomial extension of $R'$ such that degree of each new variable is $g'$,
\item the morphism $\pi'$ is extended to a homomorphism $\pi''=\pi'[t_1,\ldots,t_m]:R'[z_1,\ldots,z_m] \to R[t_1,\ldots,t_m]$ mapping $z_i$ to $t_i^c$, where $c$ is the unique integer such that $d'g'=gc$ (clearly $\order \pi'' =d'$), and, 
\item $(R'[y_1,\ldots,y_n,z_1,\ldots,z_m],\tau')$ is an artinian factorization of $\pi''$.
\end{enumerate}
\begin{displaymath}
\xymatrix{R'[y_1,\ldots,y_n,z_1,\ldots,z_m] \ar[rrd]^{\textstyle \tau'}     &  &                       &                        & &\\  R'[z_1,\ldots,z_m] \ar[rr]^{\textstyle \pi''}\ar@.[u]             &  &R[t_1,\ldots,t_m] \ar[rrrd]^{\textstyle \tau} &             & &\\
          R' \ar[rr]^{\textstyle \pi'} \ar@.[u]         &  & R \ar[rrr]^{\textstyle \pi} \ar@.[u]          &            & &S}
\end{displaymath}
Let $L''=L\otimes_R R[t_1,\ldots,t_m]$. We will show that the statement of our result is not affected if we replace $R'$ by $R'[y_1,\ldots,y_n,z_1,\ldots,z_m]$, $R$ by $R[t_1,\ldots,t_m]$ and $L$ by $L''$.

Firstly, from Theorem \ref{factor_and_reg} and Proposition \ref{polynomial_ext},
\[
\reg_{\tau'}L''=\reg_{\pi''}L''=\reg_{\pi'}L+m(1-g/d'g')<\infty
\]
and
\[
\reg_{\pi}N=\reg_{\tau}N <\infty.
\]

We also have $L''^{(1/d)}\simeq L^{(1/d)}\dtensor {R^{(1/d)}} R^{(1/d)}[t'_1,\ldots,t'_m]$; the latter ring is a polynomial extension of $R^{(1/d)}$ with variables of degree $dg$. Denote by $P'$ the $S$-complex $L''^{(1/d)}\dtensor {R[t_1,\ldots,t_m]^{(1/d)}} N$, we have isomorphisms in $\dgrcat {S}$:
\begin{align*}
P'              &\simeq  (L^{(1/d)}\dtensor {R^{(1/d)}} R^{(1/d)}[t'_1,\ldots,t'_m])\dtensor {R^{(1/d)}[t'_1,\ldots,t'_m]} N \nonumber \\
                &\simeq  L^{(1/d)}\dtensor {R^{(1/d)}} N \simeq  P.
\end{align*}
But $\tau \circ \tau'$ is an artinian factorization of $\pi \circ \pi'$, hence $\reg_{\pi \circ \pi'}P=\reg_{\tau\circ\tau'}P'$.  Therefore we can make the artinian assumptions from above.

Furthermore, replacing $R$ by $R^{(1/d)}$, $R'$ by $R'^{(1/dd')}$, $N$ by $N^{(1/d')}$ and using Theorem \ref{reg_fract_Veronese}, we can assume that $d=d'=1$. Now $P=L\dtensor{R}N$ and we have to show that $\reg_{\pi \circ \pi'}P<\infty$.

From the associativity of derived tensor product
\[
(k'\dtensor {R'}L)\dtensor R N \simeq k'\dtensor {R'} (L\dtensor R N),
\]
one obtains the standard spectral sequence
\[
\Tor^R_{\ell}(\Tor^{R'}_{\ell'}(k',L),N) \Rightarrow \Tor^{R'}_{\ell+\ell'}(k',P).
\]
 Since $R/\mm' R$ is artinian, $\Tor^{R'}_{\ell}(k',L)$ has finite length as an $R$-module for any $\ell$. Consequently we have for all $i,j\in \Z$:
\begin{align*}
\dim_l \Tor^{R'}_i(k',P)_j &\le \sum_{\ell+\ell'=i}\dim_l  \Tor^R_\ell(\Tor^{R'}_{\ell'}(k',L),N)_j \\
                           &\le  \sum_{\ell+\ell'=i}\sum_u \dim_k \Tor^{R'}_{\ell'}(k',L)_u \dim_l \Tor^R_\ell(k(-u),N)_j\\
                           &=\sum_{\ell+\ell'=i}\sum_u \dim_k \Tor^{R'}_{\ell'}(k',L)_u \dim_l \Tor^R_\ell(k,N)_{j-u}.
\end{align*}
The second inequality follows by filtering $\Tor^{R'}_{\ell'}(k',L)$ appropriately.

Define $t^{\pi'}_i(L)=\sup\{j:H_i(k\dtensor{R'}L)_j\neq 0\}$ and similarly $t^{\pi}_i(N), t^{\pi'\circ \pi}_i(P)$. From the last inequality, we get for all $i$,
\begin{align*}
t^{\pi'\circ \pi}_i(P) &\le \sup_{\ell\ge \inf N}\{t^{\pi'}_{i-\ell}(L)+t^{\pi}_\ell(N)\}\\
                       &\le \sup_{\ell\ge \inf N}\{g'(i-\ell)+g'\reg_{\pi'}L+ g\ell+g\reg_{\pi} N\}\\                              
                       &\le g'i +g'\reg_{\pi'}L+ g\reg_{\pi} N +(g-g')\inf N.
\end{align*}
The last inequality follows since $g'\ge g$, which in turn holds because $R'\to R$ is degree-preserving. Finally, from Remark \ref{artinian_case} and the last string we conclude that
\[
\reg_{\pi'\circ \pi} P \le \reg_{\pi'}L + \frac{g\reg_{\pi} N +(g-g')\inf N}{g'} <\infty.
\]
\end{proof}
\section{Proofs of the main results}
\label{F_application}
\subsection*{Proof of Theorem \ref{relative_thm}}
Recall that $R,S$ are standard graded algebras over $k,l$ respectively. Denote by $\mm,\nn$ the corresponding graded maximal ideals. The main work in the proof of Theorem \ref{relative_thm} is done via the following statement, which was suggested to the authors by Srikanth Iyengar.

\begin{prop}
\label{reg_of_complex}
For any complex $G\in \dgrcatp R$, we have
\[
\reg_R G \le \sup_{i\in \Z}\{\reg_R H_i(G)-i\}.
\]
\end{prop}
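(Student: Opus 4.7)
The plan is to bound the Tor of the complex $G$ by the Tors of its homology modules via a convergent hyperhomology spectral sequence, and then turn this Tor estimate into the desired regularity inequality. Let $F\to G$ be the minimal graded free resolution of $G$ in $\dgrcat R$, so that $\beta_{i,j}(G)=\dim_k H_i(k\otimes_R F)_j$. Filtering the double complex $k\otimes_R F$ by the canonical (good) truncations of $F$ -- or, equivalently, starting from a Cartan--Eilenberg resolution of $G$ -- produces the standard convergent graded spectral sequence
\[
E^2_{p,q}=\Tor^R_p(k,H_q(G))\;\Longrightarrow\;\Tor^R_{p+q}(k,G).
\]

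Because $G\in\dgrcatp R$, one has $H_q(G)=0$ for $q<\inf G$, so $E^2_{p,q}=0$ outside the quadrant $p\ge 0$, $q\ge \inf G$. In each fixed total degree $n=p+q$ only finitely many bidegrees contribute, so the spectral sequence degenerates on every graded strand into a finite filtration of $\Tor^R_n(k,G)$. Taking $k$-dimensions degreewise yields the key estimate
\[
\dim_k \Tor^R_i(k,G)_j\;\le\;\sum_{p+q=i}\dim_k \Tor^R_p(k,H_q(G))_j
\]
for all $i,j\in\Z$.

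The rest is bookkeeping with the definition of regularity. Set $s=\sup_{q\in\Z}\{\reg_R H_q(G)-q\}$; we may assume $s<\infty$, for otherwise the claim is vacuous. If $\beta_{i,j}(G)\ne 0$, the above bound forces $\Tor^R_p(k,H_q(G))_j\ne 0$ for some $p,q$ with $p+q=i$, hence $j-p\le \reg_R H_q(G)$ by definition of the Castelnuovo--Mumford regularity of the module $H_q(G)$, and therefore
\[
j-i=(j-p)-q\;\le\;\reg_R H_q(G)-q\;\le\;s.
\]
Taking the supremum over all $(i,j)$ with $\beta_{i,j}(G)\ne 0$ gives $\reg_R G\le s$, as desired.

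The only point deserving real attention is the convergence of the spectral sequence on each graded component, which is guaranteed by the homological boundedness of $G$ from below; once this is in place, the remainder is essentially the derived-category counterpart of the module estimate provided by Lemma \ref{reg_sequence}(ii).
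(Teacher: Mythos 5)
Your argument is correct, and it takes a genuinely different route from the paper's. You invoke the hyperhomology spectral sequence $E^2_{p,q}=\Tor^R_p(k,H_q(G))\Rightarrow\Tor^R_{p+q}(k,G)$, note it converges because $G$ is homologically bounded below (so for each total degree only finitely many $(p,q)$ with $p\ge 0$, $q\ge\inf G$ contribute), and read the regularity bound off the resulting degreewise inequality $\dim_k\Tor^R_i(k,G)_j\le\sum_{p+q=i}\dim_k\Tor^R_p(k,H_q(G))_j$. The paper instead works directly with the minimal free resolution $F$ of $G$: after shifting so that $\inf G=0$, it uses the short exact sequences $0\to B_i\to F_i\to F_i/B_i\to 0$ (where minimality of $F$ forces $\reg_R F_i\le\reg_R F_i/B_i$ and $\reg_R B_i\le\reg_R F_i/B_i+1$) together with $0\to H_i\to F_i/B_i\to B_{i-1}\to 0$, and runs an elementary induction on $i$ to bound $\reg_R B_i$ and hence $\reg_R F_i-i$ by $\max_{0\le j\le i}\{\reg_R H_j-j\}$. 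The two proofs exchange abstraction for bookkeeping: your spectral-sequence version is shorter and more conceptual once the machinery is in place, while the paper's version is self-contained, uses only the short-exact-sequence bounds of Lemma \ref{reg_sequence}(i), and makes visible exactly how minimality of the resolution is used. One small imprecision in your write-up: $k\otimes_R F$ is a single complex, not a double complex; what you want is either the filtration of $G$ (or $F$) by canonical truncations, or the double complex $k\otimes_R P$ for a Cartan--Eilenberg resolution $P$, both of which yield the same $E^2$-page --- your parenthetical already gestures at this, and it does not affect the correctness of the argument.
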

\begin{proof}
Since $H_i(G)=0$ for $i\ll 0$, the minimal graded free resolution of $G$ can be chosen to be a bounded below complex $F$ with $F_i=0$ for $i <\inf G$. Observe that for any $m\in \Z$, we have
\begin{enumerate}
\item $\shift^{-m} F$ is the minimal free resolution of $\shift^{-m}G$, hence,
\[
\reg_R \shift^{-m}G=\sup_{i\in \Z}\{\reg_R \shift^{-m}F_i-i\}= \reg_R G +m,
\]
\item $\reg_R H_i(\shift^{-m}G)-i=\reg_R H_{i+m}(G)-(i+m)+m$, so
\[
\sup_{i\in \Z}\{\reg_R H_i(\shift^{-m}G)-i\} = \sup_{i\in \Z}\{\reg_R H_i(G)-i\} +m.
\]
\end{enumerate}
Therefore by replacing $G$ by $\shift^{-m}G$, both sides of the inequality in question increase by $m$. Hence we can assume that $\inf G=0$.

Denote $B_i=B_i(F)=\img (F_{i+1}\to F_i), Z_i=Z_i(F)=\Ker (F_i\to F_{i-1})$ and $H_i=H_i(F)=H_i(G)$. Since $F$ is minimal, for each $i\ge 0$, $0 \to B_i \to F_i \to F_i/B_i \to 0$ is the beginning of the minimal graded free resolution of $F_i/B_i$. Therefore
\begin{equation}
\label{regF_i}
\reg_R F_i \le \reg_R F_i/B_i,
\end{equation}
and
\[
\reg_R B_i \le \reg_R F_i/B_i+1.
\]
On the other hand, we have the following exact sequence
\begin{equation}
\label{sequence}
0 \to H_i \to F_i/B_i \to B_{i-1} \to 0.
\end{equation}
Hence
\[
\reg_R F_i/B_i \le \max \{\reg_R H_i, \reg_R B_{i-1}\}.
\]
Combining the last two inequalities, one has
\[
\reg_R B_i \le \max\{\reg_R H_i+1, \reg_R B_{i-1}+1\}.
\]
By induction on $i\ge 0$, it is easy to see that
\begin{equation}
\label{regB_iandH_i}
\reg_R B_i \le \max\{\reg_R H_i+1, \reg_R H_{i-1}+2,\ldots, \reg_R H_0+i+1\}.
\end{equation}
Now using \eqref{regF_i}, the sequence \eqref{sequence}, and \eqref{regB_iandH_i}, we have for every $i\ge 0$,
\begin{align*}
\reg_R F_i & \le \reg_R F_i/B_i \\
           & \le \max \{\reg_R H_i, \reg_R B_{i-1}\} \\
           & \le \max \{\reg_R H_i,\reg_R H_{i-1}+1,\ldots, \reg_R H_0+i\}.
\end{align*}
This implies that
\[
\reg_R F_i -i \le \max_{0\le j \le i} \{\reg_R H_j -j \}.
\]
The proof of the result is now complete.
\end{proof}
\begin{rem}
The strict inequality in Proposition \ref{reg_of_complex} may happen. For example, take $S=k[x]/(x^2)$. Consider the Koszul complex on $x$ of $S$:
\[
G: 0\to S(-1)\xrightarrow{\cdot x}S \to 0.
\]
$G$ is the minimal resolution of $G$ itself, so clearly $\reg_R G=0$. On the other hand, $H_1(G)\cong (xS)(-1),H_0(G)=k$,
hence $\sup_{i\in \Z}\{\reg_S H_i(G)-i\}=\reg_S H_1(G)-1=1$.
\end{rem}
At this point, we are ready to prove Theorem \ref{relative_thm}.
\begin{proof}[Proof of Theorem \ref{relative_thm}]
Let $\bsy$ be minimal generators of $\nn$ modulo $\mm S$ and denote $N=K[\bsy;M]$. We have an equality between Poincar\'e series; see \cite[Lemma 1.5.3]{AF}:
\[
P^S_{k\dtensor R N}(t,y)=P^R_k(t,y)P^S_N(t,y).
\]
Denote $t_i=\sup\{j:H_i(k\dtensor R N)_j\neq 0\}$. By Lemma \ref{lem_ann_dtensor}, 
\[
\nn =\mm S+\bsy S\subseteq \Ann_{\dcat S}(k\dtensor R N),
\]
thus $H_i(k\dtensor R N)$ is an $S/\nn$-module for every $i$. But $S$ is a Koszul algebra, hence $\reg_S H_i(k\dtensor R N)=t_i$. Together with Proposition \ref{reg_of_complex}, we have
\begin{align*}
\reg_S k\dtensor R N &\le \sup_{i\in \Z} \{\reg_S H_i(k\dtensor R N)-i \}\\
                     &= \sup_{i\in \Z} \{t_i-i \}=\reg_{\vphi} M.
\end{align*}
The last equality follows from the definition of regularity over a homomorphism. Combining with the equality of Poincar\'e series, the last inequality implies that $\reg_S N + \reg_R k \le \reg_{\vphi} M$. Since $N\in \dgrcatp S$ and $N\not\simeq 0$, we infer that $\reg_S N >-\infty$, and hence $\reg_R k<\infty$. By Avramov-Peeva's theorem, $R$ is Koszul.
\end{proof}
The proof of Theorem \ref{relative_thm} also has the following corollary, which 
extends and gives a new proof of the main result of \cite{AE}.
\begin{cor}
Let $R\to S$ be a finite homomorphism of Koszul algebras over $k$. Let $M$ be a finitely generated graded $S$-module. Then $\reg_S M \le \reg_R M.$
\end{cor}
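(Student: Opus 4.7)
The plan is to adapt the argument from the proof of Theorem \ref{relative_thm} to the present setup, where now both $R$ and $S$ are assumed Koszul, and then to descend from a regularity bound on a Koszul complex $N = K[\bsy;M]$ to the desired bound on $M$ by a direct Poincar\'e series computation.

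Let $\vphi: R \to S$ denote the given finite homogeneous morphism, and pick a minimal generating set $\bsy = y_1,\ldots,y_m$ of $\nn$ modulo $\mm S$; these are linear forms since $S$ is standard graded. Set $N = K[\bsy;M]$. Because $\vphi$ is finite, $S/\mm S$ is artinian, so Remark \ref{artinian_case} yields $\reg_\vphi M = \reg_R M$. The argument in the proof of Theorem \ref{relative_thm} then applies with no modification: Proposition \ref{reg_of_complex} applied to $k \dtensor R N$, whose homologies are $l$-modules by \cite[Lemma~1.5.6]{AIM} and hence have regularity equal to top degree since $S$ is Koszul; Theorem \ref{finite_length}, applicable because $H(M)/(\bsy H(M)+\mm H(M))$ is annihilated by $\nn$; and the Poincar\'e series factorization $P^S_{k\dtensor R N}(t,y) = P^R_k(t,y) \cdot P^S_N(t,y)$ of \cite[Lemma~1.5.3]{AF} together produce
\[
\reg_R k + \reg_S N \;\le\; \reg_\vphi M \;=\; \reg_R M.
\]
Since $R$ is Koszul, $\reg_R k = 0$, and hence $\reg_S N \le \reg_R M$.

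It remains to show that $\reg_S N = \reg_S M$. Here I would exploit that $K[\bsy;S]$ is a bounded complex of free $S$-modules to obtain isomorphisms in $\dgrcat S$
\[
l \dtensor S K[\bsy;M] \;\simeq\; (l \dtensor S K[\bsy;S]) \dtensor S M \;\simeq\; K[\bsy;l] \dtensor S M.
\]
Because $\bsy \subseteq \nn$ acts trivially on $l$, the complex $K[\bsy;l]$ has zero differentials and splits as $\bigoplus_{i=0}^m l(-i)^{\binom{m}{i}}[i]$; taking homology with internal grading therefore yields the identity $P^S_N(t,y) = (1+ty)^m \cdot P^S_M(t,y)$. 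Since $(1+ty)^m$ is supported on the diagonal $j=i$ and has constant term $1$, this forces $\reg_S N = \reg_S M$, and combining with the previous step gives the desired inequality $\reg_S M \le \reg_R M$.

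The most delicate piece is the last Poincar\'e series identity, which requires a careful derived-tensor manipulation with the Koszul complex of linear forms; interestingly, this step does not require the Koszulness of $S$. Everything else is a Koszul-aware bookkeeping of material already established in the proof of Theorem \ref{relative_thm}.
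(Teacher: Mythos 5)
Your proof is correct, and it arrives at the same inequality by a slightly different route than the paper. The paper handles the nontrivial Koszul complex $N=K[\bsy;M]$ by first invoking Theorem~\ref{factor_and_reg} to replace $R$ with a polynomial extension $Q=R[t_1,\ldots,t_m]$ making $Q\to S$ surjective; then $\bsy=\emptyset$, so $N=M$ on the nose and the conclusion drops out of the Theorem~\ref{relative_thm} computation with no further work. You instead stay with the original $R$ and explicitly prove $\reg_S K[\bsy;M]=\reg_S M$ via the factorization $l\dtensor S K[\bsy;M]\simeq K[\bsy;l]\dtensor S M$ and the resulting Poincar\'e series identity $P^S_N(t,y)=(1+ty)^m P^S_M(t,y)$. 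That identity and its consequence are correct (the $y_i$ are linear since $S$ is standard graded, so the factor $(1+ty)^m$ is supported on the diagonal and nonnegativity of coefficients rules out cancellation). It is worth noting, though, that this last step is precisely the content of Lemma~\ref{Koszul_invariance} applied to $\vphi=\id_S$ with $d=g=1$: that lemma gives $\reg_S M=\reg_S K[\bsy;M]$ directly, so you could have cited it rather than rederiving it. In short, the two proofs differ in which preexisting tool absorbs the Koszul complex --- Theorem~\ref{factor_and_reg} for the paper, versus (an inline reproof of) Lemma~\ref{Koszul_invariance} for you --- with the rest of the argument (Remark~\ref{artinian_case} to identify $\reg_{\vphi}M=\reg_R M$, and the Poincar\'e-series bound from the proof of Theorem~\ref{relative_thm}) being shared.
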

\begin{proof}
Replacing $R$ by a suitable polynomial extension and using Theorem \ref{factor_and_reg}, we can assume that $R\to S$ is surjective. It is also harmless to assume that $\reg_R M$ is finite (which is actually the case since $R$ is Koszul, but we will not recourse to this fact). Hence from the proof of Theorem \ref{relative_thm} where $\bsy$ is now $\emptyset$, we have
\[
\reg_S M = \reg_S M + \reg_R k \le \reg_R M,
\]
as claimed.
\end{proof}
\begin{rem}
\label{concluding_remark}
There are several applications of Theorem \ref{relative_thm}. If $R\to S$ is a 
finite morphism of standard graded $k$-algebras such that $\reg_R S<\infty$, and 
$S$ is a Koszul algebra then $R$ is also a Koszul algebra. A similar observation 
was made in \cite[Theorem~3.2]{CDR}. In particular, in the case $S=k$ and $R\to 
k$ is the canonical surjection, Theorem \ref{relative_thm} gives the 
Avramov-Peeva's characterization of Koszul algebras. (But one does not obtain a 
new proof of this last result.)
\end{rem}

\subsection*{Proofs of Theorems \ref{orderge2} and \ref{main1}} Now we will prove Theorem \ref{orderge2}. This theorem combines with Remark \ref{artinian_case} and Theorem \ref{reg_fract_Veronese}(ii) to give Theorem \ref{main1}, by letting $\psi=\phi^e$.
The construction via derived tensor in the following argument grew out of a beautiful idea in the proof of \cite[Theorem~5.1]{AHIY}.
\begin{proof}[Proof of Theorem \ref{orderge2}]
Let $d$ be the order of $\psi$. Define inductively a complex $M^i\in \dgrcatp R$ 
as follows: $M^1=M$, and for $i\ge 1$
\[
M^{i+1}=(M^i)^{(1/d)}\dtensor {R^{(1/d)}}M.
\]
For $i\ge 1$, the action of $R$ on $M^{i+1}$ is defined as follows. Consider the diagram $R\xrightarrow{\psi^i} R\xrightarrow{\psi} R$. In the derived tensor product, we view $M$ as a complex over the ring $R$ on the right and $M^i$ as a complex over the middle ring of the diagram. The action of $R$ on $M^{i+1}=(M^i)^{(1/d)}\dtensor {R^{(1/d)}}M$ is induced by the action of $R$ on the second variable.

It is easy to see that $M^i \not\simeq 0$ for every $i\ge 1$. We will show by induction that for every $i\ge 1$, $\reg_{\psi^i}M^i<\infty$. For $i=1$, this is known from above. For $i\ge 2$, it is enough to apply Theorem \ref{composition} for the homomorphisms $R\xrightarrow{\psi^{i-1}} R\xrightarrow{\psi} R$ and complexes $L=M^{i-1}$ and $N=M$.

Denote by $\reg R$ the absolute Castelnuovo-Mumford regularity of $R$. Choose $d^i\ge (\reg R+1)/2$, then $R^{(d^i)}$ is Koszul; see \cite[Theorem~2]{ERT} and \cite[Theorem~3.6 and Remark 3.7]{CDR}. From Theorem \ref{reg_fract_Veronese}(ii) we have
\[
\reg_{\widehat{\psi^i}} M^i\le \reg_{\psi^i}M^i<\infty.
\]
Applying Theorem \ref{relative_thm} for the morphism 
$R\xrightarrow{\widehat{\psi^i}}R^{(d^i)}$ and the complex $M^i$, we conclude 
that $R$ is Koszul.
\end{proof}

\begin{rem}
\label{char_0}
The following example gives a class of graded algebras over a field $k$ of characteristic $0$ which possess endomorphisms of order at least $2$. Let $\Lambda$ be a positive affine monoid, namely, $\Lambda$ is a finitely generated submonoid of $\N^m$ for some $m\ge 1$. Let $R=k[\Lambda]$ be the affine monoid ring of $\Lambda$ with the inherited $\N^m$-grading. Let $a_1,\ldots,a_n$ be the minimal generating set of $\Lambda$, and denote by $t^{a_1},\ldots,t^{a_n}$ the corresponding generators of $R$. For any $d\ge 2$, the endomorphism of $R$ mapping $k$ to itself and $t^{a_i}$ to $t^{da_i}$ for all $i=1,\ldots,n$ is an endomorphism of order $d$.
\end{rem}

As a consequence of Theorem \ref{main1}, we have:
\begin{cor}
\label{F-Koszul}
Let $\chara k=p>0$, $k$ be $F$-finite and $R$ be a standard graded $k$-algebra. If for some $e>0$, $\reg_R \up{\phi^e}R <\infty$, then $R$ is a Koszul algebra.
\end{cor}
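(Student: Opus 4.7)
The plan is to derive this corollary as an immediate specialization of Theorem \ref{main1}. The ring $R$, viewed as a module over itself with its standard grading, is a non-zero finitely generated graded $R$-module. Therefore the hypothesis $\reg_R \up{\phi^e}R < \infty$ is exactly the condition required by Theorem \ref{main1} applied to the test module $M = R$, and the conclusion that $R$ is Koszul follows at once.

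There is no real obstacle: the entire content of the corollary is already contained in the statement of Theorem \ref{main1}. What the corollary does is highlight the most natural and intrinsic instance of that theorem, namely the one in which the test module is taken to be the ring itself. In this form, the statement becomes a direct analog of Kunz's classical characterization of regularity: whereas Kunz detects regularity of $R$ through the finiteness of flat dimension of $\up{\phi^e}R$, here we detect Koszulness of $R$ through the finiteness of Castelnuovo--Mumford regularity of $\up{\phi^e}R$, computed with respect to the Veronese grading introduced in Section \ref{Veronese_gradings}.

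The only point that warrants a brief comment is that the regularity $\reg_R \up{\phi^e}R$ appearing in the hypothesis is, by convention adopted in the introduction, the regularity of $\up{\phi^e}R$ equipped with its Veronese grading as an $R$-module, which is precisely the convention used in the statement of Theorem \ref{main1}. Once this is noted, invoking Theorem \ref{main1} with $M = R$ completes the proof, and no further ingredients from Sections \ref{fract_Veronese}--\ref{F_application} are needed beyond those already folded into that theorem.
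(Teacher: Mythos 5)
Your proposal is correct and matches the paper's primary derivation exactly: the paper states Corollary \ref{F-Koszul} explicitly as ``a consequence of Theorem \ref{main1},'' obtained by taking $M=R$. For context, the paper also records an \emph{alternative} self-contained proof (developed before the general machinery of the paper): from $r=\reg_R\up{\phi^e}R=\max_i\reg_R V_i(q,R)<\infty$ one derives, by resolving $M$ and applying the Veronese functor, the bound $\reg_R M^{(q)}\le\lceil\reg_R M/q\rceil+r$; iterating gives $\reg_R R^{(q^i)}<\infty$ for all $i$, and since $R^{(q^i)}$ is Koszul for $q^i$ large one concludes via Corollary \ref{Veronese_finite_reg}. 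That route is more elementary in that it bypasses the theory of regularity over homomorphisms entirely, at the cost of being specific to this corollary rather than proving the full Theorem \ref{main1}; your approach buys generality by piggybacking on the machinery already built.
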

This result marked a starting point for the research in this paper. We came to this result via an entirely different approach; because of the simplicity of this approach, we record it below. The techniques involved were developed in \cite[Section 5]{BCR}.

\begin{proof}[Alternative proof of Corollary \ref{F-Koszul}]
Denote $V_i=V_i(q,R)$ for $0\le i\le q-1$. By the hypothesis, $r=\reg_R 
\up{\phi^e}R=\max \{\reg_R V_i: i \in \{0,1,\ldots,q-1\} \} <\infty$. Let $M$ be 
a finitely generated graded $R$-module. Denote $M^{(q)}=V_0(q,M)$. Firstly, we 
will show that
\begin{equation}
\label{inequality}
\reg_R M^{(q)} \le \left\lceil \frac{\reg_R M}{q} \right\rceil + r.
\end{equation}
Of course, it suffices to consider the case $\reg_R M<\infty$. Let $G_{\pnt}$ be the minimal graded $R$-free resolution of $M$. Then $\reg_R G_i \le i+\ell$ for all $i \ge 0$, where $\ell=\reg_R M$.

From the exactness of the complex $G_{\pnt}^{(q)} \to M^{(q)} \to 0$ and Lemma \ref{reg_sequence}(ii), we have
\begin{equation}
\label{bounding_reg}
\reg_R M^{(q)} \le \sup \{\reg_R G_i^{(q)}-i:i\ge 0\}.
\end{equation}
Assume that $G_i=\oplus R(-j)^{\beta_{ij}}$. Observe that $R(-j)^{(q)}=V_{u_j}(-\lceil j/q \rceil)$ where $u_j=q\lceil j/q\rceil -j$. Thus we get
\[
G_i^{(q)} = \bigoplus_j \left(V_{u_j}(-\lceil j/q \rceil)\right)^{\beta_{ij}}.
\]
Hence it is clear that
\[
\reg_R G_i^{(q)} \le \left \lceil \frac{\reg_R G_i}{q} \right\rceil + r  \le \left\lceil \frac{i+\ell}{q} \right\rceil +r.
\]
Together with \eqref{bounding_reg}, this implies that
\begin{align*}
\reg_R M^{(q)} & \le   \sup \left \{\left\lceil \frac{i+\ell}{q} \right\rceil +r-i:i\ge 0 \right \} \\
               & \le  \left \lceil \frac{\ell}{q} \right\rceil + r = \left\lceil \frac{\reg_R M}{q} \right\rceil + r,
\end{align*}
proving \eqref{inequality}.

Note that $(R^{(q^i)})^{(q)}=R^{(q^{i+1})}$. Therefore using the inequality \eqref{inequality} and induction we have $\reg_R R^{(q^i)}<\infty$ for all $i >0$. Hence choosing $q^i\ge (\reg R+1)/2$ so that $R^{(q^i)}$ is Koszul, and applying Theorem \ref{relative_thm}, we conclude that $R$ is also Koszul.
\end{proof}

\subsection*{Examples}
We close this section with some examples to illustrate Theorem \ref{main1}.
\begin{ex}
Let $k=\Z/(2)$, $R$ be defined by monomial relations. In the following, the notation $V_i$ stands for $V_i(2,R)$ for $i=0,1$.

(i) If $R=k[x,y]/(xy)$, then $V_0= R, V_1= R/(y) \oplus R/(x)$ where the generators of the second module are $\bar{x}, \bar{y}$, respectively. Therefore $\reg_R V_0=\reg_R V_1=0$. The ring $R$ is Koszul.

(ii) More generally, consider $R=Q/I_G=k[\Delta]$, where $G$ is a finite simple graph on $n$ vertices, $Q=k[x_1,\ldots,x_n]$ and $I_G=(x_ix_j:\{i,j\} ~\textnormal{is an edge of $G$})$. Let $\Delta$ be the associated simplicial complex, i.e. a subset $C$ of $[n]$ belongs to $\Delta$ if and only if $\prod_{i\in C}x_i \notin I_G$. Then for $i=0,1$,
\[
V_i= \bigoplus_{0\le t \le \frac{n-i}{2}}\mathop{\bigoplus_{C \in \Delta}}_{|C|=2t +i}[R/(x_j:j\in N(C))](-t),
\]
where $N(C)$ denotes the sets of vertices of $G$ which are neighbors in $G$ of some element in $C$. By convention $N(\emptyset)=\emptyset$. We prove the equality for $i=0$; similar arguments apply to the remaining case. Observe that $V_0$ is the direct sum of the cyclic modules $R\prod_{i\in C}x_i$, where $C\in \Delta$ and $|C|$ is even. Now for each such face $C$, let $t$ be the degree of $\prod_{i\in C}x_i$ in $V_0$, we have an isomorphism $R\prod_{i\in C}x_i \cong [R/(x_j:j\in N(C))](-t)$. The formula for $V_0$ is now proved.

It is well-known that $R$ is Koszul and for every subset $I$ of $[n]$, it holds that $\reg_R R/(x_i:i\in I)=0$. Hence for $i=0,1$, we have
\[
\reg_R V_i= \max\{t: \textnormal{there exists} ~ C \in \Delta ~\textnormal{such that}~ |C|=2t +i\}.
\]

(iii) If $R=k[x]/(x^3)$, then $V_0=k[x]/(x^2), V_1=k$. The minimal graded free resolution of $V_1=k$ over $R$ is
\[
T_{\pnt}: \cdots \xrightarrow{\cdot x} R(-3) \xrightarrow{\cdot x^2} R(-1) \xrightarrow{\cdot x} \ R.
\]
Concretely $T_{2i}=R(-3i), T_{2i+1}=R(-(3i+1))$ for all $i\ge 0$. In particular, $\reg_R k=\infty$ and $R$ is not Koszul.
\end{ex}
\section{Remarks on the local case and open questions}
\subsection*{Linearity defect}
We can try to extend our main result Theorem \ref{main1} to the local situation. First, let us recall some notations. Let $(R,\mm,k)$ be a Noetherian local ring with maximal ideal $\mm$ and residue field $R/\mm \cong k$. For each finitely generated $R$-module $M$, let $F$ be the minimal free resolution of $M$. Then $F$ has the filtration $\{\Fc^iF\}_{i\ge 0}$ where $\Fc^iF$ is the following complex
\[
\cdots \to F_{i+1} \to F_i \to \mm F_{i-1} \to \cdots \to \mm^iF_0 \to 0.
\]
The {\em linear part} of $F$, denoted by $\linp^R F$ is the associated graded complex of the filtration $\{\Fc^iF\}_{i\ge 0}$. Concretely, $(\linp^R F)_i=\gr_{\mm}(F)(-i)$ for each $i\ge 0$. The {\em linearity defect} $\lind_R M$ of $M$, introduced by Herzog and Iyengar \cite{HIy}, is defined by
\[
\lind_R M =\sup\{i: H_i(\linp^R F)\neq 0\}.
\]
The ring $R$ is called Koszul if $\lind_R k=0$. In contrast to the graded case, it is an open question whether for a local ring $(R,\mm,k)$, $\lind_R k <\infty$ implies that $R$ is a Koszul ring; see \cite[Question 1.14]{HIy}. Our next result says that $\lind_R \up{\phi^e}R<\infty$ for some large $e$ implies $\lind_R k<\infty$.

First we introduce an invariant which is modelled after the number $\nu(R)$ of Takahashi and Yoshino in \cite[Section 3]{TY}.
\begin{defn}
For each maximal sequence of $R$-regular elements ${\bf y}=y_1,\ldots,y_r \in 
\mm^2$, where $r=\depth R$, note that $H^0_{\mm}(R/({\bf y})) \cap \mm^s(R/({\bf 
y}))=0$ for $s\gg 0$. Denote by $\overline{\nu}(R)$ the smallest number $s$ such 
that $H^0_{\mm}(R/({\bf y})) \cap \mm^s(R/({\bf y}))=0$ for some maximal 
sequence of $R$-regular elements ${\bf y}=y_1,\ldots,y_r \in \mm^2$.
\end{defn}
Note that $\nu(R) \le \overline{\nu}(R) <\infty$. The following result is suggested by \cite[Corollary 3.3]{TY} due to Takahashi and Yoshino.
\begin{thm}
\label{F-Koszul-local}
Let $(R,\mm,k)$ be an $F$-finite local ring of characteristic $p>0$. If $\lind_R \up{\phi^e}R <\infty$ for some $e$ such that $p^e\ge \overline{\nu}(R)$ then $\lind_R k<\infty$.
\end{thm}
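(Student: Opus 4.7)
The plan is to reduce the statement to an Artinian problem by modding out a regular sequence in $\mm^2$, then to pass to the associated graded ring and apply Theorem \ref{orderge2}. The invariant $\overline{\nu}(R)$ enters as the control needed for the Frobenius to survive the reduction.

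\smallskip

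First, since $e\ge\overline{\nu}(R)$ also gives $p^e\ge\overline{\nu}(R)$, I would pick a maximal $R$-regular sequence $\mathbf{y}=y_1,\dots,y_t\in\mm^2$ realising $\overline{\nu}(R)$, so that $\bar R = R/(\mathbf{y})$ is Artinian with $H^0_{\mm}(\bar R)\cap\mm^{p^e}\bar R=0$. In characteristic $p$, $\phi^e$ is additive and carries $(\mathbf{y})$ into $(\mathbf{y}^{[p^e]})\subseteq(\mathbf{y})$, so it descends to an endomorphism $\bar\phi^e$ of $\bar R$. Because $\mathbf{y}\subseteq\mm^2$, the initial forms of the $y_i$ lie in degrees $\ge 2$ in $\gr_{\mm}R$; after refining the choice of $\mathbf{y}$ if needed so that these initial forms are regular on $\gr_{\mm}R$, a standard change-of-rings argument for the linear part (as in \cite{HIy}) gives $\lind_R k=\lind_{\bar R} k$. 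The same reasoning, combined with the fact that $y_i$ acts on $\up{\phi^e}R$ through $y_i^{p^e}\in\mm^{2p^e}$, yields $\lind_{\bar R}\up{\bar\phi^e}\bar R\le\lind_R\up{\phi^e}R<\infty$. This reduces the problem to the Artinian setting.

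\smallskip

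Next, I would pass to the associated graded $G=\gr_{\mm}\bar R$, a standard graded $k$-algebra, on which $\bar\phi^e$ induces a homogeneous endomorphism $\widetilde{\bar\phi^e}\colon G\to G$ of order $p^e\ge 2$. The socle condition $H^0_{\mm}(\bar R)\cap\mm^{p^e}\bar R=0$ is designed precisely so that, when taking associated graded of a minimal free resolution of $\up{\bar\phi^e}\bar R$, no spurious socle element appears in degrees $\ge p^e$; this should let one transfer the finiteness of $\lind_{\bar R}\up{\bar\phi^e}\bar R$ to the finiteness of $\reg_{\widetilde{\bar\phi^e}}G$. Applying Theorem \ref{orderge2} to the endomorphism $\widetilde{\bar\phi^e}$ of order $p^e\ge 2$ then gives that $G$ is Koszul. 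Since $G$ Koszul is well known to imply $\lind_{\bar R}k=0$, the first step yields $\lind_R k<\infty$, as claimed.

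\smallskip

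The main obstacle, in my view, is the Artinian case: it requires a careful comparison between the filtered homological data over $\bar R$ (the linearity defect of the Frobenius module) and the graded homological data over $G$ (its Castelnuovo-Mumford regularity with respect to the induced Frobenius). The socle vanishing $H^0_{\mm}(\bar R)\cap\mm^{p^e}\bar R=0$ coming from $e\ge\overline{\nu}(R)$ is exactly what keeps the associated-graded resolution from acquiring pathological low-degree behaviour, and it is here that the hypothesis $e\ge\overline{\nu}(R)$ does its real work. This step is the local/filtered analogue of the regularity-over-a-homomorphism machinery developed in Sections \ref{reg_over_morphism}--\ref{factorizations_compositions}, and once it is in place the rest of the argument reduces to a direct invocation of Theorem \ref{orderge2}.
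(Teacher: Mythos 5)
Your proposal follows a genuinely different route from the paper's, and unfortunately the route has several gaps that I do not see how to close.

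The paper's proof is short and does not pass through Theorem~\ref{orderge2} at all. Following Takahashi--Yoshino, it produces a maximal $R$-regular sequence $\bsy$ such that $k$ is a \emph{direct summand} of the $R$-module $\up{\phi^e}R/(\bsy)$; the hypothesis $e\ge\overline{\nu}(R)$ is used precisely to guarantee this direct-summand statement. Since linearity defect does not increase on passing to direct summands, $\lind_R k \le \lind_R\up{\phi^e}R/(\bsy)$, and then Lemma~\ref{ld_modulo_reg_element} gives $\lind_R\up{\phi^e}R/(\bsy)=\lind_R\up{\phi^e}R+\depth R<\infty$. No reduction to $\bar R=R/(\bsy)$ as a ring, and no passage to $\gr_{\mm}\bar R$, is needed.

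By contrast, you try to reduce to $\bar R$, pass to $G=\gr_{\mm}\bar R$, and apply Theorem~\ref{orderge2}. Concretely, the gaps are:
(1)~The equality $\lind_R k=\lind_{\bar R}k$ is not automatic; you invoke an unspecified change-of-rings and a ``refinement'' making the initial forms of $\bsy$ regular on $\gr_{\mm}R$. Such a refinement need not exist --- a maximal regular sequence in $\mm^2$ whose initial forms are $\gr_{\mm}R$-regular is a strong extra hypothesis --- and without it the equality is unsubstantiated. Even Lemma~\ref{ld_modulo_reg_element}, which is the paper's actual tool, controls $\lind_R(M/xM)$ over $R$, not $\lind_{R/(x)}M$ over $R/(x)$, so this first step already diverges from what the paper proves.
(2)~The crucial transfer from $\lind_{\bar R}\up{\bar\phi^e}\bar R<\infty$ to $\reg_{\widetilde{\bar\phi^e}}G<\infty$ is asserted, not shown. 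You acknowledge this is ``the main obstacle,'' but the socle-vanishing $H^0_{\mm}(\bar R)\cap\mm^{p^e}\bar R=0$ does not by itself give such a filtered-to-graded comparison; there is no mechanism in the paper for passing from the linearity defect of a module over a local ring to the $\reg_\vphi$-regularity of its associated graded, and I do not see how to supply one. This is not a detail to be filled in but a missing theorem of comparable depth to the ones you are trying to invoke.
(3)~Your strategy aims to conclude that $G$ is Koszul, i.e.\ $\lind_{\bar R}k=0$, which is strictly stronger than the theorem's conclusion $\lind_R k<\infty$. This overshoot is a warning sign: the theorem deliberately stops at finiteness of linearity defect precisely because it is open (cf.\ the paper's discussion of Question~1.14 of \cite{HIy}) whether finite linearity defect of the residue field forces a local ring to be Koszul, and there is no reason to expect the hypothesis to yield the Koszul property of $\bar R$ or of $G$.

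In short, the paper's argument is a direct-summand argument bounded by Lemma~\ref{ld_modulo_reg_element}, using $\overline{\nu}(R)$ solely to secure the Takahashi--Yoshino splitting; your argument reroutes through Theorem~\ref{orderge2} and associated graded rings, and the bridge from filtered $\lind$ over $\bar R$ to graded $\reg_\vphi$ over $G$ is absent.
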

Note that one has the following useful lemma.
\begin{lem}
\label{ld_modulo_reg_element}
Let $M$ be a finitely generated $R$-module. Let $x\in \mm^2$ be an $M$-regular element. Then
\[
\lind_R M/xM= \lind_R M+1.
\]
\end{lem}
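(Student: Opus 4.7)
The plan is to build the minimal free resolution of $M/xM$ from that of $M$ via the Koszul complex on $x$, and then show that the linear part of this new resolution splits as a direct sum thanks to the hypothesis $x \in \mm^2$.

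First I would set up the resolution. Let $F$ be the minimal free resolution of $M$ over $R$. Because $x$ is $M$-regular, the tensor product $G := F \otimes_R K(x)$, where $K(x) = (0 \to R \xrightarrow{\cdot x} R \to 0)$ is the Koszul complex on $x$, is a free resolution of $M/xM$. Writing $G_i = F_i \oplus F_{i-1}$, the differential has the form $(a,b) \mapsto (d_F(a) \pm xb,\, -d_F(b))$, and since $x \in \mm$ and $F$ is minimal, both $d_F$ and multiplication by $x$ land in $\mm G$, so $G$ is a minimal free resolution of $M/xM$.

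Next I would compute $\linp^R G$ and show it splits. Because $x \in \mm^2$, the map $\cdot x \colon F_{i-1} \to F_i$ sends $\mm^s F_{i-1}$ into $\mm^{s+2} F_i$, raising $\mm$-adic filtration degree by at least two. Thus in the associated graded complex defining $\linp^R G$, the off-diagonal component coming from $x$ vanishes, and the linear part decomposes as
\[
\linp^R G \;\cong\; \linp^R F \;\oplus\; (\linp^R F)[-1]
\]
(with an internal degree shift reflecting the Koszul grading). Taking homology,
\[
H_i(\linp^R G) \;\cong\; H_i(\linp^R F) \,\oplus\, H_{i-1}(\linp^R F)
\]
for every $i$, and therefore
\[
\lind_R(M/xM) \;=\; \sup\{i : H_i(\linp^R G) \neq 0\} \;=\; \lind_R M + 1,
\]
with the convention $\infty + 1 = \infty$ covering the case $\lind_R M = \infty$.

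The only genuine technical point — and the main place care is needed — is tracking the signs and degree shifts in the Koszul tensor product to verify the splitting of $\linp^R G$ cleanly. The conceptual heart of the argument, however, is completely transparent: the assumption $x \in \mm^2$ (rather than merely $x \in \mm$) is precisely what ensures that multiplication by $x$ contributes nothing at the first-order associated-graded level, which is where the linearity defect is measured.
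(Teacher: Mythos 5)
Your proof is correct and follows essentially the same route as the paper's: the paper forms the mapping cone of the chain map $\theta\colon G\xrightarrow{\cdot x}G$ lifting multiplication by $x$, which is precisely $F\otimes_R K(x)$ in your notation, observes that $x\in\mm^2$ kills the off-diagonal contribution of $\theta$ at the associated-graded level so that $\linp^R(W)\cong\linp^R G\oplus(\linp^R G)[-1]$, and reads off the shift of the linearity defect by one.
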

\begin{proof}
Denote $\lind_R M=\ell$. Let $G$ be the minimal free resolution of $M$ over $R$. The morphism $\theta: G\xrightarrow{\cdot x} G$ lifts the morphism $M \xrightarrow{\cdot x} M$. Using mapping cone on the exact sequence
\[
0\to M \xrightarrow{\cdot x} M \to M/xM \to 0,
\]
then $W=G \oplus \shift G$ is minimal free resolution of $M/xM$ over $R$.

Since $x\in \mm^2$, one has $\theta(G) \subseteq \mm^2 G$. Therefore $\linp^R(W)=\linp^R G \oplus \shift\linp^R G$. Hence from $\lind_R M=\ell$, we get $H_{\ell+1}(\linp^R(W))=H_{\ell}(\linp^R G) \neq 0$ and $H_{i}(\linp^R(W))=H_{i-1}(\linp^R G) = 0$ for $i\ge \ell+2$.
So $\lind_R M/xM =\lind_R M+1$, as desired.
\end{proof}
\begin{proof}[Proof of Theorem \ref{F-Koszul-local}]
Denote $s=\overline{\nu}(R), r=\depth R$, $S=\up{\phi^e}R$ and $\nn=\up{\phi^e}\mm$. First we show that there exists a maximal $S$-regular sequence ${\bf y} \in \nn^2$ such that $k$ is isomorphic to a direct summand of the $R$-module $S/({\bf y})$. Note that $\overline{\nu}(S)=\overline{\nu}(R)=s$ since $S$ has the same ring structure as $R$. So we can choose a maximal $S$-regular sequence $\bsy=y_1,\ldots,y_r\in \nn^2$ such that $H^0_{\nn}(S/(\bsy))\cap \nn^s(S/(\bsy))=0$. Denote $\overline{S}=S/(\bsy)$. Consider the composition map $\tau: H^0_{\nn}(\overline{S})\to \overline{S} \to \overline{S}/\phi^e(\mm)\overline{S}$. Since $p^e\ge s$, we have $\phi^e(\mm)\overline{S} \subseteq \nn^s \overline{S}$ and hence $\tau$ is injective. Now $\phi^e$ induces a finite map 
$$
k=R/\mm\to \overline{S}/\phi^e(\overline{\mm})\overline{S},
$$
hence $H^0_{\nn}(\overline{S})$ is also a $k$-vector space. In particular, $\tau$ is a splitting map of $k$-vector spaces, which in turn implies that $H^0_{\nn}(\overline{S})\to \overline{S}$ is also splitting. But clearly $H^0_{\nn}(\overline{S})\neq 0$ since $\depth \overline{S}=0$. Therefore we find a copy of $k$ which is a direct summand of $\overline{S}=S/(\bsy)$.

Therefore $\lind_R k \le \lind_R S/({\bf y})$. Now from Lemma \ref{ld_modulo_reg_element}, we have
\[
\lind_R S/({\bf y})=\lind_R S +\depth R.
\]
Thanks to the hypothesis, this yields $\lind_R k \le \lind_R S +\depth R <\infty$. The proof is now complete.
\end{proof}

\subsection*{Final remarks}
Finally, we introduce several questions related to the main results.
\begin{quest}
\label{strongF-Koszul}
Let $R$ be a standard graded $k$-algebra where $k$ is $F$-finite of positive characteristic $p$. Is it true that if for some $e>0$, $\reg_R R^{(q)} <\infty$, then $R$ is Koszul?
\end{quest}
Note that the analog of this question for regular rings is not true, that is, it can happen that $\projdim_R R^{(q)}<\infty$ for every $e>0$, but $R$ is not regular. For example, let $k=\Z/(p)$ and $R=k[x,y]/(xy)$. It is not hard to check that $\phi^e: R\to R^{(q)}$ is an isomorphism for every $e>0$. On the other hand, we do not know of any counterexample to Question \ref{strongF-Koszul}.

The majority of results for Frobenius of local rings are also applied more 
generally to {\em contracting endomorphisms}; see, e.g., \cite{AHIY}. We can 
also define contracting endomorphisms for an $\N$-graded ring $R$ as follows: an 
endomorphism $\psi: R\to R$ is called {\em contracting} if for every homogeneous 
element $a\in R$, the sequence $\{\psi^i(a)\}_{i\ge 1}$ converges in the 
$\mm$-adic topology of $R$. For example, the Frobenius endomorphism and more 
generally, the endomorphisms of order at least $2$ considered in this paper are 
contracting. The homomorphism $\vphi: k[x,y]\to k[x,y]$ given by 
$\vphi(x)=x^2,\vphi(y)=y^3$, where $k[x,y]$ is standard graded, is contracting 
but has no order.
\begin{quest}
\label{contracting_endo}
Is it possible to define regularity for complexes over contracting homomorphisms and generalize Theorem \ref{orderge2} to a corresponding statement in that general setting?
\end{quest}

For local rings, we wonder if the following improvement of Theorem \ref{F-Koszul-local} is true.
\begin{quest}
Let $(R,\mm,k)$ be an $F$-finite local ring of characteristic $p>0$. Is it true that whenever $\lind_R \up{\phi^e}R<\infty$ for some $e>0$, then $\lind_R k<\infty$?
\end{quest}

\end{document}